\pdfoutput=1
\documentclass[12pt,a4paper]{article}
\usepackage[T1]{fontenc}
\usepackage{amsthm,amssymb,amsmath}
\usepackage{stmaryrd}
\usepackage{float} 
\usepackage{enumerate}
\usepackage{array} 
\usepackage{setspace}
\usepackage{cite}
\usepackage{varioref}
\usepackage{hyperref}
\newtheorem{thm}{Theorem}[section] 
\newtheorem{dfn}{Definition}[section] 
\newtheorem{lem}{Lemma}[section] 
\newtheorem{prop}{Proposition}[section]
\newtheorem{cor}{Corollary}[section]
\newtheorem{rem}{Remark}[section]
\newtheorem{claim}{Claim}[section]
\author{Mrunal Hardikar, Anuradha Garge}
\title{Integral quadratic forms over a ring of $p$-adic integers}

\begin{document}
	\maketitle
	Lee \cite{JunginLee} proved a necessary and sufficient condition that an integral quadratic form $\sum_{i=1}^{m} a_iX_i^2$ is universal over $M_2(\mathbb{Z})$. For a positive integer $n \geq 2$, Lee defined $f(n)$ to be the smallest positive integer $m$ such that for every pairwise coprime $a_1, a_2, \ldots a_m \in \mathbb{Z}$, $\sum_{i=1}^{m}a_iX_i^2$ is universal over $M_n(\mathbb{Z})$. He gave bounds on $f(n)$ in \cite{JunginLee}. Koo and Lee further improved the bounds in \cite{Koo2025}.
	
	This paper is organized as follows. Let $\mathbb{Z}_p$ be the ring of $p$-adic integers. In Section \ref{Sec M2Zp Main theorems}, we give necessary and sufficient condition for a quadratic form $\sum_{i=1}^{m}a_iX_i^2$ to be universal over $M_2(\mathbb{Z}_p)$ for $p=2$ and for an odd prime $p$. Consequently we express matrices of over $\mathbb{Z}_p$ as sum of squares. For a positive integer $n \geq 3$, we define $f(n)$ to be the smallest positive integer $m$ such that for every $p$-adic integers $a_1,a_2, \ldots a_m$ with at least three of them units, the diagonal quadratic form $\sum_{i=1}^{m}a_iX_i^2$ is universal over $M_n(\mathbb{Z}_p)$. In Section \ref{Sec bounds f(n)},we find bounds on $f(n)$ for $n \geq 3$. 
	
	\section{Preliminaries}\label{Sec Preliminaries}
	For every $n \geq 1$, let $A_n= \mathbb{Z}/p^n\mathbb{Z}$. It is the ring of classes of integers (mod $p^n$). We have a homomorphism $$\phi_n : A_n \rightarrow A_{n-1}$$ defined by $\phi(x+ p^n \mathbb{Z}) = x+p^{n-1} \mathbb{Z}$. Clearly, $\phi_n$ is a surjective ring homomorphism.
	\begin{eqnarray*}
		\mbox{ker }\phi_n &=& \{x +p^n \mathbb{Z} \mbox{ } | \mbox{ }\phi_n(x) = 0+p^{n-1} \mathbb{Z}\} \\
		&=& \{x+p^n \mathbb{Z} \mbox{ }| \mbox{ }x + p^{n-1} \mathbb{Z}=0+p^{n-1} \mathbb{Z}\} \\
		&=& \{x+p^n \mathbb{Z} \mbox{ } | \mbox{ }x= p^{n-1}K \mbox{ for some }k \in \mathbb{Z}\} \\
		&=& p^{n-1} (\mathbb{Z}/p^n\mathbb{Z}) \\
		&=& p^{n-1}A_n
	\end{eqnarray*}
	Hence, the sequence $$\cdots \rightarrow A_n \rightarrow A_{n-1} \rightarrow \cdots \rightarrow A_2 \rightarrow A_1$$ forms a projective system indexed by integers $\geq 1$.
	\begin{dfn}\label{dfn padic integer as a limit of projective system}
		The ring of $p$-adic integers $\mathbb{Z}_p$ is the projective limit of the system $(A_n, \phi_n)$ defined above.
	\end{dfn}
	By definition, an element of $\mathbb{Z}_p$ is a sequence $x=(\cdots,  x_n, \cdots , x_1)$ with $x_n \in A_n$ and $\phi_n(x_n)= x_{n-1}$ if $n \geq 2$. Define coordinate-wise addition and multiplication on $\mathbb{Z}_p$. With these operations, $\mathbb{Z}_p$ forms a commutative ring with unity. In other words, $\mathbb{Z}_p$ is a subring of the product $\prod_{n \geq 1} A_n$.
	Let $\epsilon_n : \mathbb{Z}_p \rightarrow A_n$ be the function which associates to a $p$-adic integer $x$ its $n$-th component $x_n$. We state the following propositions from \cite{JPSerre}.
	\begin{prop}\label{prop exact seq of abelian group}
		The sequence $0 \longrightarrow \mathbb{Z}_p \overset{p^n}{\longrightarrow} \mathbb{Z}_p  \overset{\epsilon_n}{\longrightarrow} A_n \longrightarrow 0 $ is an exact sequence of abelian groups.
	\end{prop}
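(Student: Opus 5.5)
We verify exactness at each of the three places, working directly from Definition \ref{dfn padic integer as a limit of projective system}, which views $x \in \mathbb{Z}_p$ as a compatible sequence $(x_k)_{k\ge1}$ with $x_k \in A_k$ and $\phi_k(x_k)=x_{k-1}$. Both maps are clearly additive: multiplication by $p^n$ is a group endomorphism of the ring $\mathbb{Z}_p$, and $\epsilon_n$ is a coordinate projection of the subring $\mathbb{Z}_p \subset \prod_k A_k$. So the work is entirely in the three exactness claims, which we take in order.

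\textbf{Surjectivity of $\epsilon_n$.} Given $a + p^n\mathbb{Z} \in A_n$, take the constant sequence whose $k$-th component is $a + p^k\mathbb{Z}$. The maps $\phi_k$ are reduction maps, so this sequence is compatible and lies in $\mathbb{Z}_p$. Its $n$-th component is $a+p^n\mathbb{Z}$.

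\textbf{Injectivity of multiplication by $p^n$.} It suffices to treat $p$, since multiplication by $p^n$ is the $n$-fold composite. Suppose $px=0$, and fix $k\ge1$. Then $p x_{k+1}=0$ in $A_{k+1}$. Writing $x_{k+1}=y+p^{k+1}\mathbb{Z}$, this says $p^{k+1}\mid py$, so $p^k \mid y$. Hence $x_k=\phi_{k+1}(x_{k+1})=y+p^k\mathbb{Z}=0$. Since $k$ was arbitrary, $x=0$.

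\textbf{Exactness in the middle.} First, $\epsilon_n(p^n x)=p^n x_n=0$ in $A_n$, so the image of $p^n$ lies in $\ker\epsilon_n$. The converse inclusion is the main step. Let $x$ satisfy $x_n=0$, and for each $k\ge n$ choose an integer lift $\tilde x_{n+k}$ of $x_{n+k}$. Reducing mod $p^n$ and using compatibility gives $p^n \mid \tilde x_{n+k}$. We then set
\[
y_k = \tilde x_{n+k}/p^n \bmod p^k ,
\]
and check three things:
\begin{enumerate}[(i)]
\item $y_k$ is independent of the choice of lift. Changing the lift changes $\tilde x_{n+k}$ by a multiple of $p^{n+k}$, which changes $\tilde x_{n+k}/p^n$ by a multiple of $p^k$.
\item The $y_k$ are compatible. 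By compatibility of the $x$'s, $\tilde x_{n+k+1}\equiv \tilde x_{n+k} \pmod{p^{n+k}}$; dividing by $p^n$ gives a congruence mod $p^k$.
\item $p^n y = x$. The $m$-th component of $p^n y$ is $p^n y_m$. Since $y_m$ is also the reduction mod $p^m$ of $y_{m+n}$, we get $p^n y_m \equiv p^n\cdot \tilde x_{m+2n}/p^n = \tilde x_{m+2n} \pmod{p^{m+n}}$. Reducing further mod $p^m$ gives $x_m$.
\end{enumerate}

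The only real obstacle is the index bookkeeping in (iii): one has to remember that an equality in $A_m$ can be checked after lifting to a higher level. Using the integer lifts $\tilde x$ throughout keeps this manageable. This completes the plan, and the proof follows \cite{JPSerre}.
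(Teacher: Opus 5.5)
Your proof is correct, and it is essentially the standard argument from Serre's \emph{A Course in Arithmetic}; the paper itself gives no proof and only cites that source. In particular, injectivity comes from the fact that $px=0$ forces $x_{k+1}\in p^kA_{k+1}$ and hence $x_k=0$, and your middle-exactness step, dividing lifts of $x_{n+k}$ by $p^n$, is Serre's isomorphism $A_{k}\to p^nA_{n+k}$ written out explicitly.

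Two cosmetic fixes: choose the lifts $\tilde x_{n+k}$ for all $k\ge 0$, not only $k\ge n$. In (iii) you can read off $p^ny_m=x_m$ in $A_m$ directly from $y_m\equiv\tilde x_{n+m}/p^n \pmod{p^m}$, without passing through $y_{m+n}$.
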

	Hence, we can identify $\mathbb{Z}_p/p^n\mathbb{Z}_p$ with $A_n= \mathbb{Z}/p^n\mathbb{Z}$.
	\begin{prop}\label{prop units in zp neces and suff}
		\begin{enumerate}[(a)]
			\item For an element of $\mathbb{Z}_p$ to be invertible it is necessary and sufficient that it is not divisible by $p$.
			\item If $U$ denotes group of invertible elements of $\mathbb{Z}_p$, every nonzero element of $\mathbb{Z}_p$ can be uniquely written in the form $p^nu$ with $u \in U$ and $n \geq 0$. 
		\end{enumerate}
	\end{prop}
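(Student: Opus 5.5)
We argue directly from Definition \ref{dfn padic integer as a limit of projective system} and Proposition \ref{prop exact seq of abelian group}. Throughout, an element $x \in \mathbb{Z}_p$ is the compatible sequence $(x_n)$ with $x_n = \epsilon_n(x) \in A_n$. By the exact sequence with $n=1$, the kernel of $\epsilon_1$ is exactly $p\mathbb{Z}_p$. So ``$x$ not divisible by $p$'' is equivalent to $x_1 \neq 0$ in $A_1 = \mathbb{Z}/p\mathbb{Z}$.

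For (a), necessity is immediate. If $xy = 1$, then $x_1 y_1 = 1$ in $A_1$, so $x_1 \neq 0$ and $p \nmid x$.

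For sufficiency, assume $x_1 \neq 0$. We first show that each $x_n$ is a unit of $A_n$. Lift $x_n$ to an integer $a$. Compatibility gives $a \equiv x_1 \not\equiv 0 \pmod p$, so $\gcd(a,p^n)=1$ and $a$ is invertible mod $p^n$. Let $y_n$ be its inverse. Since $\phi_n$ is a ring homomorphism, $\phi_n(y_n)$ is an inverse of $\phi_n(x_n) = x_{n-1}$. Inverses are unique, so $\phi_n(y_n) = y_{n-1}$. Hence $y = (y_n)$ is an element of $\mathbb{Z}_p$, and coordinatewise $xy = 1$.

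For the existence part of (b), take $x \neq 0$. Some $x_m$ is nonzero, and then so is every $x_k$ with $k \geq m$, since the maps are compatible. By exactness, $x \notin p^m\mathbb{Z}_p$. Let $n$ be the largest integer with $x \in p^n\mathbb{Z}_p$; this exists and satisfies $n < m$. Write $x = p^n u$. If $p$ divided $u$, then $x$ would lie in $p^{n+1}\mathbb{Z}_p$, contradicting maximality. So $p \nmid u$, and by (a) $u \in U$.

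For uniqueness, suppose $p^n u = p^m v$ with $n \leq m$ and $u,v$ units. Multiplication by $p^n$ is injective: this is the left map of the exact sequence. Cancelling gives $u = p^{m-n} v$. If $m > n$, then $p$ divides $u$, contradicting (a). Therefore $m = n$, and injectivity gives $u = v$.

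The main obstacle is the compatibility of the inverses $y_n$ in the sufficiency part of (a). It is handled by the uniqueness of inverses together with the fact that $\phi_n$ is a ring homomorphism. The other delicate point is that the maximal exponent $n$ in (b) exists, which follows from $\bigcap_n p^n\mathbb{Z}_p = 0$; this intersection statement is itself read off from the exact sequence.
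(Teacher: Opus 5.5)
Your proof is correct and is essentially the standard argument from Serre's \emph{A Course in Arithmetic}, which the paper cites instead of giving its own proof. For (a) you invert each $x_n$ in $A_n$ and glue the inverses using uniqueness of inverses and the fact that the $\phi_n$ are ring homomorphisms; for (b) you take the maximal $n$ with $x \in p^n\mathbb{Z}_p = \ker \epsilon_n$. The only cosmetic difference is how you show $x_n$ is invertible in $A_n$: you use $\gcd(a,p^n)=1$, whereas Serre writes $xy = 1-pz$ and multiplies by $1+pz+\cdots+p^{n-1}z^{n-1}$.
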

	\begin{dfn}\label{dfn padic valuation}
		Any $x \in \mathbb{Z}_p$, can be uniquely expressed as $x=p^n \cdot u$ for some integer $n \geq 0$ and for some unit $u$.The integer $n$ is called $p$-adic valuation of $x$ and is denoted by $v_p(x)$. Define $v_p(0) = \infty$.
	\end{dfn}
	The group of units in $\mathbb{Z}_p$ consists of all $p$-adic integers $x$ with $v_p(x) = 0$. We have, $v_p(xy) = v_p(x)+v_p(y)$ and $v_p(x+y) \geq \mbox{inf }\{v_p(x),v_p(y)\}$. From these formulae, it follows that $\mathbb{Z}_p$ is an integral domain. The principal ideals of $\mathbb{Z}_p$ have an intersection $\{0\}$:
	\begin{equation*}
		\mathbb{Z}_p \supset p\mathbb{Z}_p \supset \cdots \supset p^k\mathbb{Z}_p \supset \cdots \bigcap_{k \geq 0} p^k\mathbb{Z}_p=\{0\}
	\end{equation*}
	In fact, for any non-zero $x \in \mathbb{Z}_p$ with $v_p(x)=k$, $a \notin (p^{k+1})$. These principal ideals are the only non-zero ideals of the ring of $p$-adic integers. Hence, $\mathbb{Z}_p$ is a principal ideal domain. The ring $\mathbb{Z}_p$ has a unique maximal ideal, namely $p\mathbb{Z}_p = \mathbb{Z}_p - U$. With the canonical representation $x=p^n \cdot u$ for every non-zero $x$, $\mathbb{Z}_p$ is unique factorization domain.
	\section{Universality of $\sum_{i=1}^{m}a_iX_i^2$ over $M_2(\mathbb{Z}_2$)} \label{Sec M2Zp Main theorems}
	Let $M_2(\mathbb{Z}_p)$ be the ring of matrices of order $2 \times 2$, over $p$-adic integers. For a quadratic form $\sum_{m=1}^{m}a_iX_i^2$ to be universal over a ring of integral matrices of order $2 \times 2$, one of the necessary and sufficient conditions in \cite{JunginLee}, needed the coefficients $a_1, a_2, \ldots a_m$ to be coprime. In case of $\mathbb{Z}_p$, the necessary and sufficient condition reduces to the following.
	\begin{thm}\label{thm Main theorem p=2}
		A quadratic form $\sum_{i=1}^{m} a_iX_i^2$ is universal over $M_2(\mathbb{Z}_2)$ if and only if at least two number from $a_1,a_2, \ldots a_n$ are units.
	\end{thm}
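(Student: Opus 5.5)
The plan is to prove the two directions separately, reducing modulo $2$ for necessity and building explicit squares for sufficiency.

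\emph{Necessity.} Suppose at most one $a_i$ is a unit. If none is a unit, every $a_i\in 2\mathbb{Z}_2$. Then every value $\sum a_iX_i^2$ is $\equiv 0 \pmod 2$, so the identity matrix is not represented. If exactly one is a unit, say $a_1$, then reducing an equation $\sum a_iX_i^2=M$ modulo $2$ (using Proposition \ref{prop exact seq of abelian group} entrywise) gives $\bar X_1^2=\bar M$ in $M_2(\mathbb{F}_2)$, because $\bar a_1=1$. Take $M=\begin{pmatrix}0&1\\0&0\end{pmatrix}$. If $\bar X_1^2=\bar M$, then $\bar X_1$ is nilpotent, so $\bar X_1^2=0$ since $\bar X_1$ is $2\times2$. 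This contradicts $\bar M\neq 0$. Hence the form is not universal.

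\emph{Sufficiency.} Let $a_1,a_2$ be units and set $X_i=0$ for $i\ge 3$. It suffices to show that $a_1X^2+a_2Y^2$ represents every $M\in M_2(\mathbb{Z}_2)$. Two tools will be used.
\begin{enumerate}[(i)]
\item Conjugation by $GL_2(\mathbb{Z}_2)$ preserves representability. So I may replace $M$ by a convenient conjugate. If $\bar M$ is not scalar mod $2$, then $M$ has a cyclic vector mod $2$, hence over $\mathbb{Z}_2$. In that case $M$ is conjugate to its companion matrix $\begin{pmatrix}0&-\delta\\1&\tau\end{pmatrix}$ with $\tau=\operatorname{tr}M$ and $\delta=\det M$. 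Otherwise $M=\lambda I+2M'$.
\item A criterion for $N$ to be a square $X^2$ with $\operatorname{tr}X$ a unit. By Cayley--Hamilton, $X^2=tX-sI$ with $t=\operatorname{tr}X$ and $s=\det X$. So $N=X^2$ forces $\det N=s^2$ and $\operatorname{tr}N=t^2-2s$. Conversely, suppose $s$ satisfies $s^2=\det N$ and $\operatorname{tr}N+2s=t^2$ with $t$ a unit. Then $X=(N+sI)/t$ is a $2$-adic integral matrix. Cayley--Hamilton applied to $N$ gives $X^2=N$.
\end{enumerate}
Here I use that a $2$-adic unit is a square iff it is $\equiv 1 \pmod 8$ (Hensel).

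The strategy is then to choose $Y$ so that $N=a_1^{-1}(M-a_2Y^2)$ satisfies criterion (ii). First I try scalar $Y=yI$. Then $\det N$ and $\operatorname{tr}N$ become explicit polynomials in $y^2$, and I need two things: $\det N$ should be a square, and $\operatorname{tr}N+2s$ should be $\equiv1\pmod 8$ for one of the two square roots $\pm s$. Switching $s\mapsto -s$ changes $\operatorname{tr}N+2s$ by $4s$, which gives extra freedom mod $8$. When $\operatorname{tr}N$ is odd, only units are involved, and I expect a mod-$8$ case check over the classes of $a_1,a_2,\tau,\delta$ to settle it.

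I expect the main obstacle where scalar $Y$ does not suffice. These are the cases where $\operatorname{tr}N$ is forced even, or where $\det N$ cannot be made a square for any $y$. For these I would take $Y$ non-scalar, for instance $Y$ a companion-type matrix. Then $Y^2=\operatorname{tr}(Y)Y-\det(Y)I$, and two free parameters $\operatorname{tr}Y,\det Y$ enter $N$. I also allow $X$ with $\operatorname{tr}X$ even, constructed directly, e.g.\ $X=\begin{pmatrix}0&b\\c&0\end{pmatrix}$ with $X^2=bcI$, or nilpotent-plus-scalar choices. These are used in the case $M=\lambda I+2M'$, where I would reduce to representing $M'$ or a scalar and then handle scalars via $x^2a_1+y^2a_2$ together with the identities above.

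The mod-$8$ casework in this step is where I expect most of the effort. I would organize it by the residues of $\operatorname{tr}M$ and $\det M$ modulo $8$.
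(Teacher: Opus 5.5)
Your necessity argument is correct, and cleaner than the paper's. The sufficiency half, however, is only a plan: the step that actually produces $X$ and $Y$ is left to a case check you ``expect'' to work. That step cannot be completed. The statement you reduce to is false, and so is the ``if'' direction of the theorem. The obstruction sits in the case you flagged ($\bar M$ scalar, $\operatorname{tr}M$ even). Let $a_1,a_2$ be units, and suppose $a_1X^2+a_2Y^2=M$ with $M\equiv\lambda I\pmod 2$ and $\operatorname{tr}M$ even. Write $t_i,s_i$ for the traces and determinants of $X,Y$, and put $\sigma=a_1s_1+a_2s_2$. Cayley--Hamilton gives
\[
a_1t_1X+a_2t_2Y-\sigma I=M,\qquad a_1t_1^2+a_2t_2^2-2\sigma=\operatorname{tr}M .
\]
The trace identity forces $t_1\equiv t_2\pmod 2$.
\begin{itemize}
\item If both are even, the matrix identity mod $2$ gives $\sigma\equiv\lambda$, hence $\operatorname{tr}M\equiv 2\lambda\pmod 4$.
\item If both are odd, it gives $X\equiv Y+cI\pmod 2$. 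Then $s_1\equiv s_2+ct_2+c^2\equiv s_2$, so $\sigma$ is even and $\operatorname{tr}M\equiv a_1+a_2\pmod 4$.
\end{itemize}
Therefore $\operatorname{diag}(2,0)$ is not represented when $a_1+a_2\equiv 0\pmod 4$ (e.g.\ $X^2+3Y^2$). Likewise $\operatorname{diag}(1,-1)$ is not represented when $a_1+a_2\equiv 2\pmod 4$ (e.g.\ $X^2+Y^2$). Only congruences mod $4$ were used, so adding terms $a_iX_i^2$ with $a_i\in 4\mathbb{Z}_2$ does not help.

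So no binary form with unit coefficients is universal over $M_2(\mathbb{Z}_2)$. In particular, $\operatorname{diag}(1,-1)$ is not a sum of two squares. It is not a square either: a square root commutes with it, so it is diagonal and needs $\sqrt{-1}\in\mathbb{Z}_2$. This contradicts Corollaries~\ref{cor f(2) for p=2} and~\ref{Cor matrix as a sum of squares p=2}.

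The paper's argument does not survive either. Case (ii) of Corollary~\ref{cor f(2) for p=2} fails on these examples. Also, the replacement $y_1\mapsto y_1+ku$, $z_1\mapsto z_1+tu$ in the proof of Theorem~\ref{suff condition main thm p=2} changes the left sides of \eqref{eq 2b} and \eqref{eq 2c}, contrary to what is claimed.

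Your framework does settle the other cases.
\begin{itemize}
\item For $\operatorname{tr}M$ odd, take $Y=\begin{pmatrix}0&\mu\\1&0\end{pmatrix}$, so that $Y^2=\mu I$ with $\mu$ arbitrary, and take $X=a_1^{-1}M+\beta I$ with $\operatorname{tr}X=1$. Your scalar $Y=yI$ only gives square multiples of $I$, which is not enough.
\item For $\bar M$ non-scalar with even trace, work in the companion basis. Take $Y$ upper triangular of trace $1$, and $X=a_1^{-1}(M+\sigma I-a_2Y)$ with $\sigma=(a_1+a_2-\operatorname{tr}M)/2$. Then $\operatorname{tr}X=1$, and the one remaining condition $a_1\det X+a_2\det Y=\sigma$ is linear in the $(1,2)$-entry of $Y$ with unit coefficient.
\end{itemize}
With a third unit coefficient, the paper's construction with $a_m$ a unit (so $N$ is free) is valid. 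What the theorem needs is an extra hypothesis, not a better proof.
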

	We prove the following Lemma.
	\begin{lem} \label{aX2 over Z2}
		A quadratic form $aX^2$, where $a$ is unit in $ \mathbb{Z}_2 $ is not universal over $M_2(\mathbb{Z}_2)$. In fact, for any ideal $2^j\mathbb{Z}_2$ of $\mathbb{Z}_2$, $aX^2$ is not universal over $M_2(\mathbb{Z}_2/2^j\mathbb{Z}_2)$.
	\end{lem}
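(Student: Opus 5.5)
The plan is to reduce everything to a single computation in $M_2(\mathbb{Z}/2\mathbb{Z})$, using the natural reduction maps. By Proposition \ref{prop exact seq of abelian group} we identify $\mathbb{Z}_2/2^j\mathbb{Z}_2$ with $A_j=\mathbb{Z}/2^j\mathbb{Z}$. For $j\ge 1$ we have surjective ring homomorphisms $\mathbb{Z}_2\to A_j\to A_1=\mathbb{F}_2$. Applied entrywise, these give ring homomorphisms $M_2(\mathbb{Z}_2)\to M_2(A_j)\to M_2(\mathbb{F}_2)$. (For $j=0$ the quotient ring is zero, so we assume $j\geq 1$ throughout.)

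The unit $a$ is not divisible by $2$, by Proposition \ref{prop units in zp neces and suff}. Hence its image in $\mathbb{F}_2$ is $1$. So if $aX^2=B$ holds in $M_2(\mathbb{Z}_2)$ or in $M_2(A_j)$, then reducing mod $2$ gives $\bar X^2=\bar B$ in $M_2(\mathbb{F}_2)$. It therefore suffices to exhibit a matrix
\[
N=\begin{pmatrix}0&1\\0&0\end{pmatrix}
\]
that is not a square in $M_2(\mathbb{F}_2)$. Then $N$, viewed with entries $0,1$ in $M_2(\mathbb{Z}_2)$ or in $M_2(A_j)$, is not of the form $aX^2$, and both claims of the lemma follow at once.

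The key step is to show that $N$ is not a square over $\mathbb{F}_2$. Suppose $Y^2=N$ with $Y\in M_2(\mathbb{F}_2)$. Then $Y^4=N^2=0$, so $Y$ is nilpotent. Since $\mathbb{F}_2$ is a field and $Y$ is $2\times 2$, the characteristic polynomial of $Y$ is $t^2$, and Cayley--Hamilton gives $Y^2=0\neq N$, a contradiction. Alternatively, one can simply check the $16$ matrices of $M_2(\mathbb{F}_2)$ directly.

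I do not expect a real obstacle. The only point needing care is that reduction mod $2$ commutes with matrix multiplication and sends $a$ to $1$, which holds because the entrywise reduction is a ring homomorphism and $a$ is a unit.
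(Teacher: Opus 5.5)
Your proof is correct and complete. The paper gives no argument of its own for this lemma, only the remark that it can be proved like Lemma 15 of \cite{Nullwala2022}, so your write-up supplies the missing content rather than duplicating a proof in the paper.

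The cited argument is presumably a direct computation with the entries of $X^2$ over $\mathbb{Z}/2^j\mathbb{Z}$, though we cannot check this from the paper. You instead push everything down to $M_2(\mathbb{F}_2)$ through the entrywise reduction homomorphism, where $a\mapsto 1$. You then rule out a square root of $N=E_{12}$ by Cayley--Hamilton for nilpotent $2\times 2$ matrices over a field.

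This route has two advantages:
\begin{itemize}
\item \emph{Uniformity.} A single witness $N$ handles $M_2(\mathbb{Z}_2)$ and every $M_2(\mathbb{Z}/2^j\mathbb{Z})$ with $j\ge 1$ at once. Its core, the case $j=1$, is exactly what the proof of Lemma \ref{a1x1^2 + a_2^jX_2^2 over Z2} uses.
\item \emph{Independence of $p$.} Nothing in your argument uses $p=2$. It therefore also justifies the paper's unproved claim, in Proposition \ref{necce condition main thm odd p}, that these lemmas hold for odd $p$.
\end{itemize}

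Your exclusion of $j=0$ is correct and worth keeping. For $j=0$ the quotient is the zero ring, over which the form is trivially universal, so the lemma as literally stated requires $j\ge 1$.
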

	\begin{proof}
		This can be proved similarly as Lemma 15 of \cite{Nullwala2022}.
	\end{proof}
	\begin{lem}\label{a1x1^2 + a_2^jX_2^2 over Z2}
		A quadratic form $aX_1^2 + 2^jX_2^2 $ for unit $a \in \mathbb{Z}_2$, and $j \geq 1$ is not universal over $M_2(\mathbb{Z}_2)$.
	\end{lem}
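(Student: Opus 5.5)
The plan is to reduce modulo $2$ and use Lemma \ref{aX2 over Z2} with $j=1$. The key observation is that the coefficient $2^j$ with $j \geq 1$ vanishes in $\mathbb{Z}_2/2\mathbb{Z}_2 \cong \mathbb{F}_2$. So modulo $2$ the form $aX_1^2 + 2^jX_2^2$ becomes $\bar a X_1^2$, which is a unary form with unit coefficient.

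Concretely, reduction of entries gives a surjective ring homomorphism $\pi : M_2(\mathbb{Z}_2) \to M_2(\mathbb{Z}_2/2\mathbb{Z}_2)$. Suppose every $B \in M_2(\mathbb{Z}_2)$ could be written as $B = aX_1^2 + 2^jX_2^2$. Applying $\pi$ gives $\pi(B) = \bar a\, \pi(X_1)^2$, because $\pi(2^j X_2^2) = 0$. Since $\pi$ is surjective, every matrix in $M_2(\mathbb{F}_2)$ would then be represented by $\bar a X^2$. This contradicts Lemma \ref{aX2 over Z2} for the ideal $2\mathbb{Z}_2$.

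To keep the argument self-contained, and not depend on the details of Lemma 15 of \cite{Nullwala2022}, I would also give an explicit witness. Over $\mathbb{F}_2$ we have $\bar a = 1$, so it suffices to find a matrix in $M_2(\mathbb{F}_2)$ that is not a square. Take $N = \begin{pmatrix} 0 & 1 \\ 0 & 0 \end{pmatrix}$. If $Y^2 = N$, then $Y^4 = N^2 = 0$, so $Y$ is nilpotent. A nilpotent $2\times 2$ matrix satisfies $Y^2 = 0$ by Cayley--Hamilton, which contradicts $Y^2 = N \neq 0$. Lifting $N$ to the same matrix in $M_2(\mathbb{Z}_2)$ gives a matrix that is not of the form $aX_1^2 + 2^jX_2^2$.

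The only step needing care is this non-square witness, i.e. confirming that Lemma \ref{aX2 over Z2} applies at level $j=1$. The nilpotent argument handles it directly.
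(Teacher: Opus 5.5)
Your proof is correct and follows the paper's argument. Both reduce modulo $2$: the term $2^jX_2^2$ vanishes and $\bar a=1$, so $X^2$ would have to be universal over $M_2(\mathbb{Z}_2/2\mathbb{Z}_2)$, contradicting Lemma~\ref{aX2 over Z2}. Your explicit witness $\begin{bmatrix}0&1\\0&0\end{bmatrix}$, handled by the nilpotency and Cayley--Hamilton argument, is a sound addition that makes the step self-contained instead of resting on the cited external lemma.
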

	\begin{proof}
		Assume contrary that the quadratic form $aX_1^2 + 2^jX_2^2 $ for unit $a \in \mathbb{Z}_2$ and for $j \geq 1$ is universal over $M_2(\mathbb{Z}_2)$. Hence the quadratic form $X_1^2$ is universal over $M_2(\mathbb{Z}/2\mathbb{Z}_2)$, a contradiction to Lemma \ref{aX2 over Z2}.
	\end{proof}
	Note that from Lemma \ref{aX2 over Z2} and Lemma \ref{a1x1^2 + a_2^jX_2^2 over Z2} for a quadratic form $\sum_{i=1}^{m} a_iX_i^2$ to be universal over $M_2(\mathbb{Z}_2)$, $m \geq 2$.
	We now give the proof of the necessary condition of Theorem \ref{thm Main theorem p=2}.
	\begin{prop}\label{necce condition main thm p=2}
		If a quadratic form $\sum_{i=1}^{m} a_iX_i^2$ is universal over $M_2(\mathbb{Z}_2)$, then at least two number from $a_1,a_2, \ldots a_n$ are units.
	\end{prop}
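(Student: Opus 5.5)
We argue by contraposition: assume that at most one of $a_1,\ldots,a_m$ is a unit in $\mathbb{Z}_2$, and show that $\sum_{i=1}^m a_iX_i^2$ is not universal over $M_2(\mathbb{Z}_2)$. The whole argument is a reduction modulo $2$. The map $\mathbb{Z}_2\to\mathbb{Z}_2/2\mathbb{Z}_2\cong\mathbb{Z}/2\mathbb{Z}$ (Proposition \ref{prop exact seq of abelian group}) induces a surjective ring homomorphism $M_2(\mathbb{Z}_2)\to M_2(\mathbb{Z}/2\mathbb{Z})$. Hence if the form represented every matrix over $\mathbb{Z}_2$, its reduction $\sum_i \bar a_i X_i^2$ would represent every matrix in $M_2(\mathbb{Z}/2\mathbb{Z})$. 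By Proposition \ref{prop units in zp neces and suff}, a non-unit $a_i$ is divisible by $2$, so $\bar a_i=0$, and every unit satisfies $\bar a_i=1$.

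We split into two cases.

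\emph{Case 1: none of the $a_i$ is a unit.} The reduced form is identically zero. It then represents only the zero matrix of $M_2(\mathbb{Z}/2\mathbb{Z})$, so any matrix of $M_2(\mathbb{Z}_2)$ whose reduction is nonzero (for instance the identity) is not represented. This contradicts universality. Alternatively, every value of the form has all entries divisible by $2$, so the identity is not a value.

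\emph{Case 2: exactly one $a_i$ is a unit.} After reordering we may take it to be $a_1$. The reduced form is $X_1^2$ over $M_2(\mathbb{Z}/2\mathbb{Z})$. Lemma \ref{aX2 over Z2}, applied with $j=1$, says $X_1^2$ is not universal over $M_2(\mathbb{Z}_2/2\mathbb{Z}_2)$, which gives a contradiction. This is exactly the mechanism already used in the proof of Lemma \ref{a1x1^2 + a_2^jX_2^2 over Z2}, now with arbitrarily many non-unit coefficients instead of one.

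The only step needing care is that reduction mod $2$ preserves representability. We justify this by noting that reduction is a ring homomorphism on matrices, so it sends $\sum a_iA_i^2$ to $\sum \bar a_i\bar A_i^2$, and it is surjective, so every target matrix mod $2$ lifts to $M_2(\mathbb{Z}_2)$. For concreteness one can also name a witness. In $M_2(\mathbb{Z}/2\mathbb{Z})$ the squares are few, and for example $\begin{pmatrix}0&1\\0&0\end{pmatrix}$ is not a square, since any $B$ with $B^2$ nilpotent and nonzero is impossible in size $2$. Thus a single explicit matrix, lifted to $M_2(\mathbb{Z}_2)$, is not represented in either case.
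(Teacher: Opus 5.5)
Your proof is correct and follows the paper's route. The paper also treats the no-unit case as obvious, and it handles the one-unit case by reducing modulo $2$ to $X_1^2$ over $M_2(\mathbb{Z}/2\mathbb{Z})$, via Lemma \ref{aX2 over Z2} and the two-variable lemma that follows it. Your version is a little more careful: it handles arbitrarily many non-unit coefficients at once, and the explicit non-square $\begin{pmatrix}0&1\\0&0\end{pmatrix}$ makes the argument self-contained rather than relying on the externally cited proof of Lemma \ref{aX2 over Z2}.
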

	\begin{proof}
		It is clear that at least one of the numbers from $a_1,a_2, \ldots a_n$ is unit. By Lemma \ref{a1x1^2 + a_2^jX_2^2 over Z2}, $2$ does not divide $m-1$ of the numbers $a_1,a_2, \ldots a_n$. Hence at least two of the numbers from $a_1,a_2, \ldots a_n$ are units.
	\end{proof}
	\begin{thm} \label{suff condition main thm p=2}
		If at least two of the numbers from $a_1,a_2, \ldots a_n$ are units, then the quadratic form $\sum_{i=1}^{m} a_iX_i^2$ is universal over $M_2(\mathbb{Z}_2)$.
	\end{thm}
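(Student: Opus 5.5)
The plan is to reduce to two unit coefficients. If, say, $a_1$ and $a_2$ are units, set $X_i=0$ for $i\ge 3$; it then suffices to show that $aX^2+bY^2$ is universal over $M_2(\mathbb{Z}_2)$ for all units $a,b\in\mathbb{Z}_2$. Two facts will be used throughout:
\begin{enumerate}[(i)]
\item the Cayley--Hamilton identity $X^2=\operatorname{tr}(X)X-\det(X)I$ for $X\in M_2(\mathbb{Z}_2)$;
\item the description of squares in $\mathbb{Z}_2$: a unit is a square if and only if it is $\equiv 1 \pmod 8$, which is Hensel's lemma in its $2$-adic form.
\end{enumerate}

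The first step is to characterize which matrices $M$ have the form $aX^2$ with $\operatorname{tr}X=t$ a unit. By (i), $aX^2=M$ is equivalent to $X=(M/a+dI)/t$ with $d=\det X$. Taking traces gives $t^2=\operatorname{tr}(M)/a+2d$. Taking determinants and substituting for $t^2$ gives $d^2=\det(M)/a^2$. Conversely, suppose $\det(M)/a^2=d^2$ for some $d\in\mathbb{Z}_2$, and $\operatorname{tr}(M)/a+2d=t^2$ for some unit $t$. Then $X=(M/a+dI)/t$ lies in $M_2(\mathbb{Z}_2)$, since $t$ is a unit, and a direct check shows $aX^2=M$. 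So $M=aX^2$ is reduced to two scalar conditions:
\begin{enumerate}[(a)]
\item $\det M$ is a square;
\item $\operatorname{tr}(M)/a+2d$ is a unit square for some choice of square root $d$ of $\det(M)/a^2$ (note $\pm d$ give two choices).
\end{enumerate}

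The second step applies this to $M=A-bY^2$. I would first try $Y=yI$ scalar, so $M=A-by^2I$. Condition (a) then requires $y$ to make $\det(A-by^2I)=\det A-by^2\operatorname{tr}A+b^2y^4$ a square. Condition (b) requires $(\operatorname{tr}A-2by^2)/a+2d$ to be a unit square, which forces $\operatorname{tr}A$ to be odd. Because of this parity restriction, scalar $Y$ alone cannot suffice. I would therefore split by the parity of $\operatorname{tr}A$:
\begin{enumerate}[(a)]
\item When $\operatorname{tr}A$ is odd, take $Y=yI$ and run a finite search over $y \bmod 8$ (and over the sign of $d$) to meet both conditions modulo $8$, respectively $16$, and then lift by Hensel.
\item When $\operatorname{tr}A$ is even, also let $Y$ have unit trace and apply the same characterization to $bY^2$. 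For example, choose $Y$ so that $bY^2=N$ is a simple fixed matrix such as $b\begin{pmatrix}1&1\\0&0\end{pmatrix}$, which equals $bY^2$ for the idempotent $Y=\begin{pmatrix}1&1\\0&0\end{pmatrix}$. Then $\operatorname{tr}(A-N)$ is odd, and the conditions become conditions on $A-N$. If needed, conjugate $A$ first; this is allowed because the set of values of the form is closed under $A\mapsto PAP^{-1}$ for $P\in GL_2(\mathbb{Z}_2)$.
\end{enumerate}

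The main obstacle is the residue bookkeeping modulo $8$. For every residue class of $(\operatorname{tr}A,\det A)$ and of the units $a,b$ modulo $8$, one must exhibit a choice of $Y$ (a scalar $yI$, or $y$ times a fixed idempotent or nilpotent) that makes $\det(A-bY^2)$ a $2$-adic square and $\operatorname{tr}(A-bY^2)/a+2d\equiv 1\pmod 8$. The delicate subcase is when $\det(A-bY^2)$ has positive valuation. There I would use the freedom in $Y$ to push the determinant to valuation $0$ or to an even valuation with a square unit part, and then invoke Hensel. If the case analysis becomes unwieldy, the fallback is to conjugate $A$ into a normal form $cI+2^kC$ with $C$ a companion matrix, and treat the scalar part with $bY^2$ and the companion part with $aX^2$ separately.
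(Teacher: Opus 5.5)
There is a genuine gap, and it cannot be closed: the statement is false, so a counterexample is the right response, not a proof. Your reduced claim, that $aX^2+bY^2$ is universal over $M_2(\mathbb{Z}_2)$ for all units $a,b$, fails. The ``residue bookkeeping modulo $8$'' you identify as the main obstacle therefore has no solution for some targets. Take $a=b=1$ and $A=\mathrm{diag}(1,-1)$, and let $t,d$ and $s,e$ be the trace and determinant of $X$ and $Y$. Cayley--Hamilton gives $A=tX+sY-(d+e)I$, and taking traces gives $t^2+s^2=2(d+e)$. There are three cases:
\begin{enumerate}[(i)]
\item If $t,s$ have different parity, the left side is odd, which is impossible.
\item If both are even, then $d+e$ is even, so $A\equiv 0 \pmod 2$, which is false.
\item If both are odd, then $t^2+s^2\equiv 2 \pmod 8$ forces $d+e$ to be odd. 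Reducing $A=tX+sY-(d+e)I$ modulo $2$ then gives $X\equiv Y \pmod 2$, hence $d\equiv e \pmod 2$, a contradiction.
\end{enumerate}
The same computation shows more generally: for units $a\equiv b \pmod 4$, the form $aX^2+bY^2$ misses every $M\equiv I \pmod 2$ with $\operatorname{tr}M\equiv 0 \pmod 4$. This obstruction is invariant under conjugation, so your fallback to a normal form $cI+2^kC$ cannot help. Your Cayley--Hamilton characterization of matrices $aX^2$ with $\operatorname{tr}X$ a unit is correct. What fails is the existence of a suitable $Y$.

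The failure is not an artifact of discarding $a_3,\dots,a_m$. For $m=2$ the example above already refutes the statement. For $m=3$, the form $X_1^2+X_2^2+4X_3^2$ still misses $\mathrm{diag}(1,-1)$, because $\mathrm{diag}(1,-1)-4X_3^2$ is again $\equiv I \pmod 2$ with trace $\equiv 0 \pmod 4$. More generally, $a_1\equiv a_2 \pmod 4$ units together with $4\mid a_i$ for all $i\ge 3$ is fatal.

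The paper argues differently from you. It uses Lee's ansatz, with prescribed traces $c_i$ and $a_mX_m^2=a_mNI$. Its proof breaks at the translational replacement: $y_1\mapsto y_1+ku$, $z_1\mapsto z_1+tu$ changes $\sum a_ic_iy_i$ and $\sum a_ic_iz_i$, so \eqref{eq 2b} and \eqref{eq 2c} no longer hold. Concretely, take coefficients $(1,1,4)$ and target $\mathrm{diag}(1,-1)$. The paper's choices $c_1=c_2=1$, together with \eqref{eq 2b}--\eqref{eq 2d}, force $E=2x_1^2-4x_1+2y_1z_1+3$, which is odd, so \eqref{eq 2a} is unsolvable. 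Any correct version must add conditions on the coefficients, and neither your plan nor the paper's proof sees them.
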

	\begin{proof}
		We begin as the Theorem $2.1$ of \cite{JunginLee}.  Let $\sum_{i=1}^{m} a_iX_i^2$ be a quadratic form over $M_2(\mathbb{Z}_2)$ such that $a_1$ and $a_2$ are units. To prove universality of such quadratic form over $M_2(\mathbb{Z}_2)$, let $ \begin{bmatrix}
			A & B  \\
			C & D  
		\end{bmatrix} $ $\in \mathbb{M}_2(\mathbb{Z}_2)$, be arbitrary. 
		\begin{equation} 
			\sum_{i=1}^{m-1} a_i \begin{bmatrix}
				x_i & y_i  \\
				z_i & c_i-x_i  
			\end{bmatrix}^2 +a_m \begin{bmatrix}
				0 & N\\
				1 & 0
			\end{bmatrix}^2 = \begin{bmatrix}
				A & B \\
				C & D
			\end{bmatrix} \label{eq 1}
		\end{equation}
		if and only if the following four equations hold.
		\begin{subequations}
			\begin{align}
				\sum_{i=1}^{m-1} a_i (x_i^2 + y_iz_i) + Na_m &= A  \label{eq 2a}\\
				\sum_{i=1}^{m-1}a_ic_iy_i &= B  \label{eq 2b}\\
				\sum_{i=1}^{m-1} a_ic_iz_i &= C \label{eq 2c} \\
				2 \sum_{i=1}^{m-1} a_ic_ix_i &= {A-D+ \sum_{i=1}^{m-1} a_ic_i^2} \label{eq 2d}
			\end{align}
		\end{subequations}
		In order to prove universality of the quadratic form, we need to prove the existence of $x_i$'s, $y_i$'s,$z_i$'s, and $N$ in $\mathbb{Z}_2$ satisfying above four equations. Once we have $x_i$'s, $y_i$'s, $z_i$'s satisfying Equations \eqref{eq 2b}, \eqref{eq 2c} and \eqref{eq 2d}   and if $a_m$ is unit then we can find $N$ satisfying Equation \eqref{eq 2a}. Suppose $a_m$ is not unit. Hence $a_m=2^j u$ for $j \geq 1$ and unit $u$ in $\mathbb{Z}_2$. Choose $c_1=a_1$ and 
		\begin{equation*}
			c_2 = \begin{cases}
				a_2 &\mbox{ if } (A-D) \equiv 0 ( \mbox{mod } 2)\\
				2a_2 &\mbox{ if } (A-D) \equiv 1 ( \mbox{mod } 2)
			\end{cases}
		\end{equation*}	
		\begin{claim}\label{clm gcd aici =1}
			GCD $(a_1c_1, a_2c_2, \ldots a_{m-1}c_{m-1}) = 1$.\\ Since $a_1$ and $c_1$ are units, $a_1c_1$ is unit and hence the Claim \ref{clm gcd aici =1}.
		\end{claim}
		\begin{claim}\label{clm 2aicizi for p=2}
			\begin{equation*}
				A-D + \sum_{i=1}^{m-1} a_ic_i^2 \equiv 0 \mbox{ } (\mbox{mod }2).
			\end{equation*}
		\end{claim}
		We prove this claim as follows:
		\begin{eqnarray*}
			\sum_{i=1}^{m-1} a_ic_i^2 &=& a_1c_1^2+a_2c_2^2 + \sum_{i=3}^{m-1} a_ic_i^2 \\
			&\equiv & a_1^3+a_2c_2^2 \mbox{ } (\mbox{mod } 2) \mbox{... by choice of } c_i's
		\end{eqnarray*}
		Since $a_1$ is units, $a_1^3 \equiv 1 (\mbox{mod }2)$. Since $c_2 \equiv A-D+1 \mbox{ } (\mbox{mod } 2)$, $c_2^2 \equiv A-D+1 \mbox{ } (\mbox{mod } 2)$. Since $a_2$ is unit, $a_2c_2^2 \equiv A-D+1 \mbox{ } (\mbox{mod } 2)$. Hence
		\begin{eqnarray*}
			A-D + \sum_{i=1}^{m-1}a_ic_i^2 &\equiv& A-D + (1+(A-D)+1) (\mbox{mod }2)\\
			&\equiv & 0 \mbox{ }(\mbox{mod }2)
		\end{eqnarray*}
		Hence, the Claim \ref{clm 2aicizi for p=2}\\
		By Claim \ref{clm gcd aici =1}, we can find $y_i$'s and $z_i$'s satisfying Equations \eqref{eq 2b} and \eqref{eq 2c}. By Claim \ref{clm 2aicizi for p=2}, we can find $x_i$'s satisfying Equation \eqref{eq 2d}. Now we need to find $N$ satisfying Equation \eqref{eq 2a}. Let $a_m=2^j u$ for some unit $u$ in $\mathbb{Z}_2$ and for an integer $j \geq 1$. We use translational replacements as in \cite{JunginLee}. Let
		\begin{equation*}
			E= \sum_{i=1}^{m-1} a_i (x_i^2 + y_iz_i) - A
		\end{equation*}
		We will prove that $E$ is multiple of $a_m$. Since $2^j$ is the highest power of $2$ that divides $a_m$, it is sufficient to prove that $2^j$ divides $E$. With the following replacements,
		\begin{eqnarray*}
			y_1 &\mapsto& y_1+ku\\
			z_1 &\mapsto& z_1+tu
		\end{eqnarray*}
		for some $t$ and $k$ in $\mathbb{Z}_2$, Equations \eqref{eq 2b}, \eqref{eq 2c} and \eqref{eq 2d} still hold and $E$ changes to $E+a_1u(y_1t+k(z_1+tu))$. Choose $t \in \mathbb{Z}_2$ such that 
		\begin{equation*}
			t= \begin{cases}
				0 &\mbox{ if } z_1 \mbox{ is unit.}\\
				1 &\mbox{ if } z_1 \mbox{ is not unit.}
			\end{cases}
		\end{equation*}
		Since $u$ is unit and by choice of $t$, $(z_1+tu)$ is unit. Choose $$k=(z_1+tu)^{-1}[(2^j-E)(ua_1)^{-1} -ty_1].$$
		With this choice of $k$ and $t$, $2^j = \mbox{New }E$. Hence $E$ is multiple of $a_m$. We can find $N \in \mathbb{Z}_2$ satisfying Equation \eqref{eq 2a}. Therefore, the given quadratic form is universal.
	\end{proof}
	Now we prove the following corollaries of the above theorem.
	\begin{cor} \label{cor f(2) for p=2}
		Any matrix $ \begin{bmatrix}
			A & B  \\
			C & D  
		\end{bmatrix} $ in $M_2(\mathbb{Z}_2)$, can be expressed as $a_1 X_1^2 + a_2 X_2^2$ where $a_1$ and $a_2$ are units.
	\end{cor}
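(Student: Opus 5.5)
Throughout I read the statement as: for \emph{given} units $a_1,a_2 \in \mathbb{Z}_2$, every matrix in $M_2(\mathbb{Z}_2)$ is a value of $a_1X_1^2+a_2X_2^2$. So this is Theorem \ref{suff condition main thm p=2} with $m=2$. I would rerun its proof with the single matrix $\begin{bmatrix} x_1 & y_1\\ z_1 & c_1-x_1\end{bmatrix}$ in the first slot, and a suitably chosen matrix in the second slot in place of $\begin{bmatrix} 0 & N\\ 1 & 0\end{bmatrix}$.

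The proof of Theorem \ref{suff condition main thm p=2} does not transfer verbatim, and this is the main obstacle. When $m=2$ the sums in \eqref{eq 2b}--\eqref{eq 2d} contain only $i=1$. Solving \eqref{eq 2b} and \eqref{eq 2c} forces $c_1$ to be a unit. But then $a_1c_1^2\equiv 1 \pmod 2$, so the parity condition of Claim \ref{clm 2aicizi for p=2} holds exactly when $A-D$ is odd. In that case I take $c_1=1$, solve \eqref{eq 2b}, \eqref{eq 2c}, \eqref{eq 2d} for $y_1,z_1,x_1$, and then solve \eqref{eq 2a} for $N$ using that $a_2$ is a unit. This settles the case $A-D$ odd. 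Note that no translational replacement is needed here, since $a_m=a_2$ is a unit.

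For $A-D$ even, I change the second matrix so that it contributes an odd amount to the difference of the diagonal entries. My first choice is $a_2\begin{bmatrix}1 & N\\ 1 & 0\end{bmatrix}^2=a_2\begin{bmatrix}1+N & N\\ 1 & N\end{bmatrix}$. This turns the system into
\begin{equation*}
a_1c_1y_1=B-a_2N,\qquad a_1c_1z_1=C-a_2,\qquad 2a_1c_1x_1=(A-D-a_2)+a_1c_1^2,
\end{equation*}
together with $a_1(x_1^2+y_1z_1)+a_2(1+N)=A$. With $c_1$ a unit the third equation now has even right-hand side, so $x_1$ and $z_1$ are determined. Substituting $y_1=(B-a_2N)/(a_1c_1)$ into the last equation makes it linear in $N$, with coefficient $a_2\bigl(1-z_1/c_1\bigr)$. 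This equation is solvable whenever that coefficient is a unit, that is, whenever $z_1$ is even. Since $z_1=(C-a_2)/(a_1c_1)$, this means precisely when $C$ is odd.

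The remaining subcase is $A-D$ even and $C$ even. There I would try the transposed ansatz $\begin{bmatrix}1 & 1\\ N & 0\end{bmatrix}$, which swaps the roles of $B$ and $C$ and so handles $B$ odd. If $B$ and $C$ are both even, I would try a second matrix such as $\begin{bmatrix}1 & N\\ 0 & 0\end{bmatrix}$ or $\begin{bmatrix}1+N & 1\\ 1 & 0\end{bmatrix}$. In each attempt I redo the same bookkeeping: the parity condition replacing Claim \ref{clm 2aicizi for p=2} must hold, and the final equation must be linear in $N$ with unit coefficient. Alternatively, one can first conjugate the target by an invertible matrix over $\mathbb{Z}_2$; this is harmless because conjugation preserves squares. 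The aim is to make an off-diagonal entry odd whenever the target is not already congruent to a scalar matrix modulo $2$. The last residual case, target $\equiv \lambda I \pmod 2$, I would treat directly by writing it as $a_1X_1^2+a_2\bigl(\lambda a_2^{-1}-a_1\bigr)I+2(\cdots)$ and lifting. I expect this case analysis, rather than the $p$-adic lifting, to be where the real work lies.
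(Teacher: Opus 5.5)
Your handling of targets that are not scalar modulo $2$ is sound. The case $A-D$ odd is exactly the paper's Case (i). The ansatz $a_2\begin{bmatrix}1&N\\1&0\end{bmatrix}^2$ works for $A-D$ even and $C$ odd, since the coefficient $a_2(1-z_1/c_1)$ of $N$ is then a unit. Transposing handles $B$ odd. The gap is the residual case $M\equiv\lambda I\pmod 2$, where you only announce a lifting. No lifting can work there. The differential $H\mapsto XH+HX$ of $X\mapsto X^2$ only produces matrices of even trace, and in fact there is a genuine obstruction modulo $4$.

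Write $t_i=\mathrm{tr}\,X_i$ and $d_i=\det X_i$, so that $X_i^2=t_iX_i-d_iI$ and $\mathrm{tr}(X_i^2)=t_i^2-2d_i$. If $M=a_1X_1^2+a_2X_2^2\equiv\lambda I\pmod 2$, then $M\equiv t_1X_1+t_2X_2+(d_1+d_2)I\pmod 2$.
\begin{itemize}
\item If exactly one $t_i$ is odd, that $X_i$ is scalar modulo $2$, so $t_i$ is even, a contradiction.
\item If both $t_i$ are even, then $\lambda\equiv d_1+d_2$ and $\mathrm{tr}\,M\equiv-2(a_1d_1+a_2d_2)\equiv 2\lambda\pmod 4$.
\item If both are odd, then $\bar X_2=\bar X_1+\epsilon I$ for some $\epsilon$. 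Hence $d_2\equiv d_1+\epsilon t_1+\epsilon^2\equiv d_1\pmod 2$, and $\mathrm{tr}\,M\equiv a_1+a_2-2(d_1+d_2)\equiv a_1+a_2\pmod 4$.
\end{itemize}
So a matrix $M\equiv\lambda I\pmod 2$ is represented only if $\mathrm{tr}\,M\equiv 2\lambda$ or $\mathrm{tr}\,M\equiv a_1+a_2\pmod 4$. For $a_1=a_2=1$, the matrix $\mathrm{diag}(1,3)$ is therefore not of the form $X_1^2+X_2^2$, and it is not a square either. If $a_1+a_2\equiv 0\pmod 4$ (e.g.\ $a_1=1$, $a_2=-1$), the matrix $\mathrm{diag}(2,0)$ is not represented. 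Since $a_1+a_2$ is always even for units, no pair of units makes $a_1X_1^2+a_2X_2^2$ universal over $M_2(\mathbb{Z}_2)$.

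Hence, in the reading you adopt, the statement is false, and your residual case cannot be closed by any choice of second matrix. This is also how the paper uses the corollary, in step (e) of the proof of Theorem~\ref{thm f(3) for p=2} and in Corollary~\ref{Cor matrix as a sum of squares p=2}. The paper's own proof fails at the same spot. Its Case (ii) takes both matrices of trace $1$, which is the both-odd configuration. When $B$ and $C$ are even, its $(1,1)$-equation reduces modulo $2$ to $\tfrac{A-D+a_1+a_2}{2}\equiv A$, which is false for $\mathrm{diag}(1,3)$ with $a_1=a_2=1$. What your argument does establish is the correct weaker result: every matrix in $M_2(\mathbb{Z}_2)$ that is not congruent to a scalar matrix modulo $2$ is of the form $a_1X_1^2+a_2X_2^2$ for any given units $a_1,a_2$.
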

	\begin{proof}
		We consider the following two cases: \\
		\textbf{Case (i): } $A-D \equiv 1 (\mbox{mod }2)$. We can prove the existence of $x_1,y_1,z_1$ and $N$ $\in \mathbb{Z}_2$ (as in the proof of Theorem \ref{thm Main theorem p=2}) satisfying the equation
		\begin{equation*} \label{eq 3}
			a_1 \begin{bmatrix}
				x_1 & y_1  \\
				z_1 & 1-x_1  
			\end{bmatrix}^2  +a_2\begin{bmatrix}
				0 & N \\
				1 & 0
			\end{bmatrix}^2 = \begin{bmatrix}
				A & B \\
				C & D
			\end{bmatrix}
		\end{equation*} 
		\textbf{Case (ii): }$A-D \equiv0 (\mbox{mod }2)$. We can prove the existence of $x_1,x_2,y_1,y_2,z_1$ and $z_2$ $\in \mathbb{Z}_2$ (as in the proof of Theorem \ref{thm Main theorem p=2}) satisfying the equation
		\begin{equation*} \label{eq 4}
			a_1 \begin{bmatrix}
				x_1 & y_1  \\
				z_1 & 1-x_1  
			\end{bmatrix}^2  + a_2 \begin{bmatrix}
				x_2 & y_2  \\
				z_2 & 1-x_2
				
			\end{bmatrix}^2 = \begin{bmatrix}
				A & B \\
				C & D
			\end{bmatrix}
		\end{equation*}
	\end{proof}
	\begin{rem}
		Note that, if $(A-D) \equiv 1 (\mbox{mod }2)$, then we do not need $a_2$ to be unit.  We can find $N$ satisfying Equation \eqref{eq 3} by using transnational replacement as in the proof of Theorem \ref{thm Main theorem p=2}.
	\end{rem}
	\begin{cor} \label{Cor matrix as a sum of squares p=2}
		Any matrix $\begin{bmatrix}
			A & B \\
			C & D
		\end{bmatrix}$ in $M_2(\mathbb{Z}_2)$ is a square or a sum of two squares.
	\end{cor}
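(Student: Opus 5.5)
The plan is to specialize Corollary \ref{cor f(2) for p=2} to $a_1=a_2=1$, which are units in $\mathbb{Z}_2$. That corollary already says every matrix $\begin{bmatrix} A & B\\ C & D\end{bmatrix}$ in $M_2(\mathbb{Z}_2)$ can be written as $X_1^2+X_2^2$. So the only work is to check that its proof really goes through for $a_1=a_2=1$, and to note that the phrase ``a square or a sum of two squares'' covers the degenerate case in which one summand happens to vanish. I follow the case split of the corollary according to the parity of $A-D$.

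\textbf{Case (i): $A-D\equiv 1 \pmod 2$.}
\begin{itemize}
\item Take $c_1=1$ and the second summand $\begin{bmatrix}0&N\\1&0\end{bmatrix}^2=\begin{bmatrix}N&0\\0&N\end{bmatrix}$.
\item Equations \eqref{eq 2b} and \eqref{eq 2c} become $y_1=B$ and $z_1=C$.
\item Equation \eqref{eq 2d} becomes $2x_1=A-D+1$. This is solvable in $\mathbb{Z}_2$ precisely because $A-D$ is odd.
\item Equation \eqref{eq 2a} then determines $N=A-x_1^2-BC$. Since the coefficient of $N$ is the unit $1$, no translational replacement is needed.
\end{itemize}

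\textbf{Case (ii): $A-D\equiv 0\pmod 2$.}
\begin{itemize}
\item Take $c_1=c_2=1$, following the choice made in the proof of Theorem \ref{suff condition main thm p=2}.
\item Then \eqref{eq 2d} reads $2(x_1+x_2)=A-D+2$, which is solvable since $A-D$ is even.
\item Equations \eqref{eq 2b} and \eqref{eq 2c} read $y_1+y_2=B$ and $z_1+z_2=C$.
\item The $(1,1)$ condition is now $x_1^2+y_1z_1+x_2^2+y_2z_2=A$, with no $N$ available to absorb the discrepancy. I will use the free parameters instead. Fix $x_1,x_2$ and set $y_2=B-y_1$, $z_2=C-z_1$. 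Substituting gives
\[
x_1^2+x_2^2+y_1z_1+(B-y_1)(C-z_1)=A.
\]
This is an equation in $y_1,z_1$ of the form $2y_1z_1-Cy_1-Bz_1=\text{const}$.
\end{itemize}

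The main obstacle is solving this last equation. The coefficient $2$ on the product term is not a unit, so the translational replacement trick from Theorem \ref{suff condition main thm p=2} does not apply directly. The plan is to exploit the freedom in $x_1,x_2$ and, if necessary, to alter the choice of $c_i$.
\begin{itemize}
\item If $B$ or $C$ is a unit, say $B$, choose $y_1=0$. The equation becomes linear in $z_1$ with coefficient $-B$, a unit, so it is solvable.
\item If $B$ and $C$ are both even, try shifting $x_1\mapsto x_1+s$, $x_2\mapsto x_2-s$, which preserves \eqref{eq 2d}. Then use the parity of the constant term, together with the option $c_1=1$, $c_2=-1$ or a square-root argument via Hensel's lemma in $\mathbb{Z}_2$, to solve the equation.
\end{itemize}

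If this direct approach stalls, I would simply invoke Corollary \ref{cor f(2) for p=2} as stated, since the paper asserts it for all units $a_1,a_2$. That yields $X_1,X_2$ with $X_1^2+X_2^2$ equal to the given matrix. The matrix is then a sum of two squares, or a single square when one of the $X_i$ is zero, which proves the corollary.
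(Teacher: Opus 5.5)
\paragraph{Verdict.} There is a genuine gap, and it cannot be closed. The statement is false in the subcase your proposal leaves open, and your fallback is exactly the paper's proof, which has the same hole.

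\paragraph{What you prove, and where it fails.} Your case (i) and the subcase of case (ii) where $B$ or $C$ is a unit are correct. The remaining subcase, with $A-D$, $B$, $C$ all even, is where the proposal stops. Take $T=\mathrm{diag}(1,-1)$. It is not a square, since $\det T=-1$ is not a square in $\mathbb{Z}_2$ (unit squares are $\equiv 1 \pmod 8$). Suppose $T=X_1^2+X_2^2$ with $X_i=\begin{bmatrix}x_i&y_i\\ z_i&w_i\end{bmatrix}$, $s_i=x_i+w_i$, $d_i=\det X_i$. Cayley--Hamilton gives $X_i^2=s_iX_i-d_iI$, so comparing entries and traces,
\[
2=s_1(x_1-w_1)+s_2(x_2-w_2),\qquad 0=s_1y_1+s_2y_2,\qquad 0=s_1z_1+s_2z_2,\qquad 0=s_1^2+s_2^2-2(d_1+d_2).
\]
Since $x_i-w_i\equiv s_i \pmod 2$, the first equation rules out two cases:
\begin{itemize}
\item if both $s_i$ are even, the right side is $\equiv 0 \pmod 4$;
\item if exactly one $s_i$ is odd, the right side is odd.
\end{itemize}
So both $s_i$ are units. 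The second and third equations then give $y_1\equiv y_2$ and $z_1\equiv z_2 \pmod 2$. One of $x_i,w_i$ is even, so $d_i\equiv y_iz_i \pmod 2$, and hence $d_1+d_2$ is even. But $s_1^2+s_2^2\equiv 2\pmod 8$, so the trace equation forces $d_1+d_2\equiv 1 \pmod 4$. This is a contradiction.

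In your own parametrization ($c_1=c_2=1$) this shows up as a parity obstruction. You get $x_1+x_2=2$, and the $(1,1)$ equation reads $x_1^2+x_2^2+2y_1z_1=1$, whose left side is even. None of your proposed repairs can help:
\begin{itemize}
\item your shift of $x_1,x_2$ preserves $x_1+x_2$, hence the parity of $x_1^2+x_2^2$;
\item Hensel's lemma has no solution mod $2$ to lift;
\item the argument above covers every choice of traces, including $c_2=-1$.
\end{itemize}

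\paragraph{The fallback does not help.} Citing Corollary \ref{cor f(2) for p=2} with $a_1=a_2=1$ is exactly what the paper does, and it is not a proof. Case (ii) of that corollary defers to the proof of Theorem \ref{suff condition main thm p=2}. There, \eqref{eq 2a} is solved only through the free parameter $N$ of the summand $\begin{bmatrix}0&N\\1&0\end{bmatrix}^2$, and $c_2$ is only defined when $m\ge 3$. With two summands of the form $\begin{bmatrix}x_i&y_i\\ z_i&1-x_i\end{bmatrix}$ there is no $N$. That is precisely the obstacle you identified. The example above shows that Corollary \ref{cor f(2) for p=2} (already for $a_1=a_2=1$) and Theorem \ref{suff condition main thm p=2} for $m=2$ are false.

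\paragraph{What survives.} Your argument does prove that $T$ is a sum of two squares whenever $A-D$ is odd or $B$ or $C$ is a unit. Without these hypotheses the correct conclusion is a sum of three squares. This follows from Theorem \ref{suff condition main thm p=2} with $m=3$ and $a_1=a_2=a_3=1$, where the third summand supplies $N$.
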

	\begin{proof}
		If $\begin{bmatrix}
			A & B \\
			C & D
		\end{bmatrix}$ is not a square, then from Corollary \ref{cor f(2) for p=2}, we can find $X_1$ and $X_2$ such that 
		\begin{equation*}
			X_1^2 + X_2^2= \begin{bmatrix}
				A & B\\
				C& D
			\end{bmatrix}
		\end{equation*}
	\end{proof}
	Now we prove the necessary and sufficient condition for an odd prime $p$.
	\begin{thm} \label{necce and suff condition main thm odd p}
		A quadratic form $\sum_{i=1}^{m} a_iX_i^2$ is universal over $M_2(\mathbb{Z}_p)$ if and only if at least two numbers from $a_1,a_2, \ldots a_n$ are units.
	\end{thm}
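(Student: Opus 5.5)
We prove the two directions separately, reducing necessity to a statement over $M_2(\mathbb{Z}_p/p\mathbb{Z}_p)=M_2(\mathbb{F}_p)$ and obtaining sufficiency from the same matrix ansatz as in Theorem \ref{suff condition main thm p=2}. For odd $p$ that ansatz becomes simpler, because $2$ is a unit in $\mathbb{Z}_p$.

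\emph{Necessity.} Suppose $\sum_{i=1}^{m} a_iX_i^2$ is universal over $M_2(\mathbb{Z}_p)$. Reducing modulo $p$ shows that $\sum_{i} \bar a_i X_i^2$ is universal over $M_2(\mathbb{F}_p)$, where terms with $p\mid a_i$ vanish.
\begin{itemize}
\item If no $a_i$ is a unit, the reduced form is identically $0$, which is absurd.
\item If exactly one $a_i$, say $a_1$, is a unit, then $\bar a_1 X^2$ would be universal over $M_2(\mathbb{F}_p)$.
\end{itemize}
This is the analogue of Lemma \ref{aX2 over Z2}, which I would prove directly. Take $J=\begin{bmatrix}0&1\\0&0\end{bmatrix}$ and suppose $\bar a_1X^2=J$. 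Then $X^2=\bar a_1^{-1}J$, so $X^4=0$. Hence $X$ is nilpotent in $M_2(\mathbb{F}_p)$, and a nilpotent $2\times 2$ matrix satisfies $X^2=0$. This contradicts $X^2=\bar a_1^{-1}J\neq 0$. Therefore at least two of the $a_i$ are units.

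\emph{Sufficiency.} Reorder the coefficients so that $a_1$ and $a_m$ are units, and use the ansatz \eqref{eq 1}. Its entries give Equations \eqref{eq 2a}--\eqref{eq 2d}. I would take $c_1=1$ and $c_i=0$, $x_i=y_i=z_i=0$ for $2\le i\le m-1$. The system then becomes:
\begin{itemize}
\item $a_1y_1=B$ and $a_1z_1=C$, solved by $y_1=a_1^{-1}B$ and $z_1=a_1^{-1}C$;
\item $2a_1x_1=A-D+a_1$, solvable because $2a_1$ is a unit for odd $p$ (this is exactly where the parity obstruction of the case $p=2$ disappears);
\item $a_mN = A-a_1(x_1^2+y_1z_1)$, solved by $N=a_m^{-1}\bigl(A-a_1(x_1^2+y_1z_1)\bigr)$ since $a_m$ is a unit.
\end{itemize}
So the translational replacements used for $p=2$ are not needed here. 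The case $m=2$ fits the same scheme, with no middle indices.

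The main step to get right is the necessity argument, namely that a single unit coefficient cannot represent all of $M_2(\mathbb{F}_p)$. The nilpotent-matrix argument above handles this cleanly. The rest is a direct verification that the matrix identity \eqref{eq 1} holds with these choices.
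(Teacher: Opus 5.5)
Your proof is correct. Its necessity half is the paper's argument made self-contained. The paper reduces modulo $p$ and appeals to Lemmas \ref{aX2 over Z2} and \ref{a1x1^2 + a_2^jX_2^2 over Z2}; the former is justified only by reference to Nullwala's Lemma 15, and the latter is stated just for two-term forms. You instead prove directly, via nilpotency, that $\bar a_1X^2$ cannot equal your matrix $J$ in $M_2(\mathbb{F}_p)$, and your reduction handles any number of non-unit coefficients at once.

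For sufficiency you use the same ansatz \eqref{eq 1} but place the units differently. The paper keeps the units in positions $1$ and $2$, copying the $p=2$ choices $c_1=a_1$, $c_2\in\{a_2,2a_2\}$. When $a_m$ is not a unit, it makes $E$ divisible by $a_m$ through translational replacements. You instead put the second unit in the $N$-slot, take $X_2=\cdots=X_{m-1}=0$, and use that $2a_1$ is invertible, so the whole system is solved in closed form.

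The paper's arrangement buys uniformity with the case $p=2$. There the parity condition of Claim \ref{clm 2aicizi for p=2} is the reason both units must sit among the first $m-1$ slots. For odd $p$ that condition is vacuous, since $2$ is a unit. Your reordering also bypasses the replacement step, which as written does not preserve \eqref{eq 2b} and \eqref{eq 2c}: shifting $y_1$ by $ku$ and $z_1$ by $tu$ changes their left-hand sides by $a_1c_1ku$ and $a_1c_1tu$. That step would need compensating shifts in other coordinates to be valid.
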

	We prove the necessary and sufficient conditions as follows.
	\begin{prop}\label{necce condition main thm odd p}
		If a quadratic form $\sum_{i=1}^{m} a_iX_i^2$ is universal over $M_2(\mathbb{Z}_p)$, then at least two numbers from $a_1,a_2, \ldots a_n$ are units.
	\end{prop}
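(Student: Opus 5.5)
The plan is to argue by contraposition and reduce modulo $p$, in the same spirit as Lemma \ref{a1x1^2 + a_2^jX_2^2 over Z2} for $p=2$. Suppose $\sum_{i=1}^{m} a_iX_i^2$ is universal over $M_2(\mathbb{Z}_p)$. The reduction map $\mathbb{Z}_p \to \mathbb{Z}_p/p\mathbb{Z}_p \cong \mathbb{Z}/p\mathbb{Z}$ (Proposition \ref{prop exact seq of abelian group}) induces a surjective ring homomorphism $M_2(\mathbb{Z}_p) \to M_2(\mathbb{F}_p)$. Applying it to any representation of a matrix shows that the reduced form $\sum_{i=1}^m \bar a_i X_i^2$ is universal over $M_2(\mathbb{F}_p)$. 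By Proposition \ref{prop units in zp neces and suff}, $\bar a_i = 0$ exactly when $a_i$ is not a unit. So it suffices to rule out the cases of zero units and exactly one unit.

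If none of the $a_i$ is a unit, the reduced form is identically zero. It then cannot represent, say, the identity matrix of $M_2(\mathbb{F}_p)$, a contradiction.

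If exactly one coefficient is a unit, say $a_1$, the reduced form is $\bar a_1 X^2$ with $\bar a_1 \neq 0$. I would show this is not universal over $M_2(\mathbb{F}_p)$, which is the odd-prime analogue of Lemma \ref{aX2 over Z2}. Take the target $\bar a_1 N$ with
\[
N=\begin{bmatrix} 0 & 1\\ 0 & 0\end{bmatrix}.
\]
Since $\bar a_1$ is invertible, $\bar a_1X^2=\bar a_1N$ forces $X^2=N$. Then $X^4=N^2=0$, so $X$ is a nilpotent $2\times 2$ matrix over a field. By Cayley--Hamilton its characteristic polynomial is $X^2$, so $X^2=0\neq N$, a contradiction. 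Hence $\bar a_1X^2$ misses $\bar a_1N$, and therefore at least two of the $a_i$ must be units.

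The only step needing care is this non-squareness argument. It is field-independent, so it works uniformly for odd $p$ (and indeed also for $p=2$). The remaining steps are routine bookkeeping about reduction modulo $p$. One minor point is that the statement writes $a_n$ where it means $a_m$; I would read it as referring to $a_1,\ldots,a_m$.
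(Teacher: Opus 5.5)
Your proof is correct and follows essentially the paper's route: reduce modulo $p$ to $M_2(\mathbb{Z}/p\mathbb{Z})$, where every non-unit coefficient vanishes, and then use that a unit multiple of a single square cannot be universal there (the odd-$p$ analogue of Lemma \ref{aX2 over Z2}). The differences are that you prove this last fact yourself, by showing the nilpotent matrix $N$ is not a square over a field, where the paper defers to \cite{Nullwala2022}, and that you handle arbitrary $m$ and the no-unit case in one reduction instead of going through the two-variable lemma.
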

	\begin{proof}
		 If a quadratic form $\sum_{i=1}^{m} a_iX_i^2$ is universal over $M_2(\mathbb{Z}_p)$, then it is universal over $M_2(\mathbb{Z}_p/p\mathbb{Z}_p)$. Lemma \ref{aX2 over Z2} and Lemma \ref{a1x1^2 + a_2^jX_2^2 over Z2} hold for an odd prime $p$ also. Hence at least two number from $a_1,a_2, \ldots a_n$ is unit.
	\end{proof}
	\begin{thm} \label{Zp sufficient case}
		If at least two of the numbers from $a_1,a_2, \ldots a_m$ are units, then the quadratic form $\sum_{i=1}^{m} a_iX_i^2$ is universal over $M_2(\mathbb{Z}_p)$.
	\end{thm}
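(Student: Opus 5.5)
I will follow the strategy of the proof of Theorem \ref{suff condition main thm p=2}, which becomes simpler for odd $p$ because $2$ is a unit in $\mathbb{Z}_p$. After reordering, assume $a_1$ and $a_2$ are units. Given an arbitrary $\begin{bmatrix} A & B\\ C & D\end{bmatrix}\in M_2(\mathbb{Z}_p)$, I use the same ansatz \eqref{eq 1}: the first $m-1$ matrices have the form $\begin{bmatrix} x_i & y_i\\ z_i & c_i-x_i\end{bmatrix}$ and the last is $\begin{bmatrix}0&N\\1&0\end{bmatrix}$. It then suffices to solve the system \eqref{eq 2a}--\eqref{eq 2d} over $\mathbb{Z}_p$. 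If $m=2$, only $i=1$ appears in the sums and $a_m=a_2$ is a unit.

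First I choose $c_1=1$ and $c_i=0$ for $2\le i\le m-1$. Since $a_1c_1=a_1$ is a unit, \eqref{eq 2b} and \eqref{eq 2c} are solved by $y_1=a_1^{-1}B$, $z_1=a_1^{-1}C$, with the remaining $y_i,z_i$ set to $0$ for now. Equation \eqref{eq 2d} reads $2a_1x_1 = A-D+a_1$. Because $p$ is odd, $2a_1$ is a unit, so $x_1=(2a_1)^{-1}(A-D+a_1)$. No parity analysis like Claim \ref{clm 2aicizi for p=2} is needed. The other $x_i$ ($i\ge2$) are free, since they do not enter \eqref{eq 2b}--\eqref{eq 2d} when $c_i=0$.

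The remaining step is \eqref{eq 2a}, namely $a_mN = A-\sum_{i=1}^{m-1}a_i(x_i^2+y_iz_i)$. I split into two cases.

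\textbf{Case 1: $a_m$ is a unit.} Solve directly for $N$.

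\textbf{Case 2: $a_m$ is not a unit.} Since at least two coefficients are units, I reorder so that $a_2$ is a unit with $2\le m-1$. I then make the right-hand side divisible by $a_m$, or even exactly zero, using the free variables of index $2$. Because $c_2=0$, the variables $x_2,y_2,z_2$ appear only in \eqref{eq 2a}, through $a_2(x_2^2+y_2z_2)$. Setting $x_2=0$, $y_2=1$ and $z_2=a_2^{-1}\bigl(A-a_1(x_1^2+y_1z_1)\bigr)$ makes the right-hand side $0$, so $N=0$ works.

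Actually this construction works uniformly whenever $m\ge3$. The only real subtlety is $m=2$ with $a_2$ a unit, which is covered by Case 1. So no translational replacements of the kind used in the proof of Theorem \ref{suff condition main thm p=2} are needed here.

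I expect the main obstacle to be only bookkeeping. One must make sure the ordering places a unit at index $1$ and, when $a_m$ is a non-unit, another unit among indices $2,\dots,m-1$. One must also check that the cancellation $c_i-x_i$ on the diagonal is consistent with \eqref{eq 2d} for indices with $c_i=0$. Expanding $\begin{bmatrix}x&y\\z&-x\end{bmatrix}^2=(x^2+yz)I$ confirms this: such terms contribute only scalar matrices, and these affect only \eqref{eq 2a} and the $D$-entry consistently. This completes the plan.
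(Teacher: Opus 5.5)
Your proof is correct, and it takes a different and simpler route than the paper's at the key step. The paper attaches the scalar term $\begin{bmatrix}0&N\\1&0\end{bmatrix}^2=NI$ to $a_m$, which may be a non-unit. It reuses the choices $c_1=a_1$, $c_2\in\{a_2,2a_2\}$ from the $p=2$ case, and then needs translational replacements to make the residual $E$ divisible by $a_m=p^ju$.

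You instead take $c_1=1$ and solve \eqref{eq 2d} outright, since $2a_1$ is a unit for odd $p$. Then $a_1X_1^2$ matches the target up to a scalar matrix $\lambda I$. You absorb $\lambda I$ with the second unit via $X_2=\begin{bmatrix}0&1\\a_2^{-1}\lambda&0\end{bmatrix}$, so no divisibility question arises and every remaining $X_i$ can be $0$. In effect you prove Corollary \ref{Cor f(2) for odd p} directly and read off the theorem from it, rather than deriving the corollary from the theorem's machinery.

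Your route also avoids a weak point in the paper's argument. As written, the shifts $y_1\mapsto y_1+ku$ and $z_1\mapsto z_1+tu$ change the left sides of \eqref{eq 2b} and \eqref{eq 2c} by $a_1c_1ku$ and $a_1c_1tu$. These do not vanish in general because $a_1c_1$ is a unit. So, contrary to the paper's claim, those equations are not preserved unless the shifts are compensated by paired shifts in another index, as in Lee's integer argument.

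The Lee-style framework is what you would need if the coefficients were only coprime rather than units. Over $\mathbb{Z}_p$ with two unit coefficients, your direct construction is shorter and fully rigorous.
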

	\begin{proof}
		Let $a_1$ and $a_2$ be units. We can proceed with the four equations as above. Choose $c_i$'s as in Theorem \ref{thm Main theorem p=2}, so that Claims 2.1 and 2.2 hold.  We can choose $x_i$'s, $y_i$'s, $z_i$'s satisfying Equations \eqref{eq 2b}, \eqref{eq 2c} and \eqref{eq 2d}. If $a_m$ is unit, we can choose $N$ such that Equation \eqref{eq 2a} holds. If $a_m$ is not unit, let $a_m = p^ju$ for some unit $u \in \mathbb{Z}_p$. We can use translational replacements as in earlier theorem. Let $z_1=p^n \alpha$ for some unit $\alpha \in \mathbb{Z}_p$. Choose $t \in \mathbb{Z}_p$ such that
		\begin{equation*}
			t= \begin{cases}
				0 &\mbox{ if } n=0 \\
				u^{-1} &\mbox{ if } n \neq 0
			\end{cases}
		\end{equation*}
		If $n=0$, $z_1+tu = \alpha$. Since $\alpha$ is unit, $z_1 + tu$ is unit. If $n \neq 0$, $z_1+tu = p^n\alpha +1 \equiv 1 (\mbox{mod }p)$. Hence, $z_1+tu$ is unit.
		As in earlier theorem, we can find $N$ satisfying Equation \eqref{eq 2a}. In this case, new $E$ will be multiple of $p^j$. Hence the proof.  
	\end{proof}
	We state the following corollaries. The proof is similar to $\mathbb{Z}_2$ case. 
	\begin{cor}\label{Cor f(2) for odd p}
		Let $a_1$ and $a_2$ be units. Then any matrix $ \begin{bmatrix}
			A & B  \\
			C & D  
		\end{bmatrix} $ in $M_2(\mathbb{Z}_p)$, can be expressed as $a_1X_1^2 + a_2X_2^2$.
	\end{cor}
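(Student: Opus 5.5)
The plan is to specialize the proof of Theorem \ref{Zp sufficient case} to $m=2$. Because $p$ is odd, $2$ is a unit in $\mathbb{Z}_p$. This should remove the case split on the parity of $A-D$ that was needed in Corollary \ref{cor f(2) for p=2}, so a single ansatz should work for every matrix. Given an arbitrary $\begin{bmatrix} A & B\\ C & D\end{bmatrix}\in M_2(\mathbb{Z}_p)$, I would look for a solution of the form
\begin{equation*}
a_1\begin{bmatrix} x_1 & y_1\\ z_1 & c_1-x_1\end{bmatrix}^2 + a_2\begin{bmatrix} 0 & N\\ 1 & 0\end{bmatrix}^2=\begin{bmatrix} A & B\\ C & D\end{bmatrix},
\end{equation*}
with $c_1=1$. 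This is exactly \eqref{eq 1} with $m=2$, except that the coefficient of the second square is now the unit $a_2$.

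First I would expand both squares. The first square is $\begin{bmatrix} x_1^2+y_1z_1 & c_1y_1\\ c_1z_1 & y_1z_1+(c_1-x_1)^2\end{bmatrix}$ and the second is $N I$. Comparing entries gives the analogues of \eqref{eq 2a}--\eqref{eq 2d} with $m=2$:
\begin{itemize}
\item $a_1(x_1^2+y_1z_1)+a_2N=A$;
\item $a_1c_1y_1=B$;
\item $a_1c_1z_1=C$;
\item $2a_1c_1x_1=A-D+a_1c_1^2$.
\end{itemize}
The last equation comes from subtracting the $(2,2)$ equation from the $(1,1)$ equation.

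Next I would solve these in order.
\begin{itemize}
\item Since $a_1c_1=a_1$ is a unit, $y_1=(a_1c_1)^{-1}B$ and $z_1=(a_1c_1)^{-1}C$ satisfy the second and third equations. This is Claim \ref{clm gcd aici =1} in its simplest form.
\item Since $p$ is odd, $2a_1c_1$ is a unit. So $x_1=(2a_1c_1)^{-1}(A-D+a_1c_1^2)$ satisfies the fourth equation, with no congruence condition like Claim \ref{clm 2aicizi for p=2} required.
\item Finally, $a_2$ is a unit, so $N=a_2^{-1}\bigl(A-a_1(x_1^2+y_1z_1)\bigr)$ satisfies the first equation.
\end{itemize}
All of $x_1,y_1,z_1,N$ lie in $\mathbb{Z}_p$, so the matrix is $a_1X_1^2+a_2X_2^2$ with $X_1=\begin{bmatrix} x_1 & y_1\\ z_1 & 1-x_1\end{bmatrix}$ and $X_2=\begin{bmatrix} 0 & N\\ 1 & 0\end{bmatrix}$.

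There is no real obstacle here. The only point needing care is checking that the four scalar equations are equivalent to the matrix equation, in particular the $(2,2)$ entry, which the subtraction above recovers. Unlike the $p=2$ case, the translational replacement trick is not needed, because $a_2$ itself is a unit.
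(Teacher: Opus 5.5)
Your proof is correct and follows the paper's intended route. It is the four-equation reduction from the proof of Theorem \ref{suff condition main thm p=2} with $m=2$ and $c_1=1$, which is exactly the Case (i) ansatz of Corollary \ref{cor f(2) for p=2}; since $2$ is a unit for odd $p$, that single ansatz handles every matrix, so you rightly drop both the parity case split and the translational replacement (the latter because $a_2$ is a unit).
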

	\begin{cor} \label{Cor matrix as a  square odd p}
		Any matrix $\begin{bmatrix}
			A & B \\
			C & D
		\end{bmatrix}$ in $M_2(\mathbb{Z}_p)$ is a square or a sum of two squares.
	\end{cor}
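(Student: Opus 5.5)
The statement is the odd-$p$ analogue of Corollary \ref{Cor matrix as a sum of squares p=2}. I would derive it directly from Corollary \ref{Cor f(2) for odd p} with the choice $a_1=a_2=1$, exactly as in the $\mathbb{Z}_2$ case. Since $1$ is a unit in $\mathbb{Z}_p$, Corollary \ref{Cor f(2) for odd p} (equivalently Theorem \ref{Zp sufficient case} with $m=2$ and $a_1=a_2=1$) gives matrices $X_1,X_2\in M_2(\mathbb{Z}_p)$ with
\begin{equation*}
X_1^2+X_2^2=\begin{bmatrix} A & B\\ C & D\end{bmatrix}
\end{equation*}
for every matrix in $M_2(\mathbb{Z}_p)$. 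The dichotomy in the statement then follows at once: if the given matrix is a square we are done, and otherwise it is a sum of two squares by the displayed identity.

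For completeness, I would recall how the representation is produced, following the proof of Theorem \ref{Zp sufficient case} with $m=2$. Take $X_1=\begin{bmatrix} x_1 & y_1\\ z_1 & c_1-x_1\end{bmatrix}$ with $c_1=1$, and $X_2=\begin{bmatrix} 0 & N\\ 1 & 0\end{bmatrix}$. Equations \eqref{eq 2b}, \eqref{eq 2c} then force $y_1=B$ and $z_1=C$. Since $p$ is odd, $2$ is a unit, so Equation \eqref{eq 2d} is solvable for $x_1$ with no parity condition. Finally, Equation \eqref{eq 2a} determines $N=A-(x_1^2+y_1z_1)$, because the coefficient $a_2=1$ is a unit. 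This is simpler than the $p=2$ case: no case split on $A-D$ and no translational replacement is needed.

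The only step needing care is Equation \eqref{eq 2d}, where one must divide by $2c_1$. This requires $2$ to be invertible, which is exactly where oddness of $p$ enters. There is no real obstacle beyond invoking the earlier corollary.
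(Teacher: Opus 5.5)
Your proposal is correct and follows the paper's route: the paper also obtains this corollary from Corollary \ref{Cor f(2) for odd p} with $a_1=a_2=1$, exactly as in the $\mathbb{Z}_2$ case. Your explicit check is also correct. Taking $c_1=1$, $y_1=B$, $z_1=C$, $x_1=(A-D+1)/2$ and $N=A-x_1^2-BC$ shows directly that, since $2$ is a unit for odd $p$, every matrix in $M_2(\mathbb{Z}_p)$ is in fact a sum of two squares, with no case split or translational replacement needed.
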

	\section{Bounds on $f(n)$} \label{Sec bounds f(n)}
	For a positive integer $n \geq 3$, define $f(n)$ to be the smallest positive integer $m$ such that for every $p$-adic integers $a_1,a_2, \ldots a_m$, with at least three of them units,  $\sum_{i=1}^{m}a_iX_i^2$ is universal over $M_n(\mathbb{Z}_p)$.
	Now we consider matrices of order $3$ over $\mathbb{Z}_p$. We proceed as Theorem 2.2 of \cite{Koo2025}.
	\begin{thm} \label{thm f(3) for p=2}
		For any prime $p$, $f(3) \leq 4$.
	\end{thm}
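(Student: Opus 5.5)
The plan is to imitate Theorem 2.2 of \cite{Koo2025} and reduce the $3\times 3$ problem to the $2\times 2$ results of Section \ref{Sec M2Zp Main theorems}. After reordering we may assume $a_1,a_2,a_3$ are units and $a_4$ is arbitrary. Fix a target $M=\begin{bmatrix} M_{11} & \beta \\ \gamma^T & \delta\end{bmatrix}\in M_3(\mathbb{Z}_p)$, where $M_{11}$ is $2\times 2$, $\beta,\gamma\in\mathbb{Z}_p^2$ and $\delta\in\mathbb{Z}_p$. Every $X\in M_3(\mathbb{Z}_p)$ has the block form $X=\begin{bmatrix} Y & b\\ c^T & d\end{bmatrix}$, and then
\begin{equation*}
X^2=\begin{bmatrix} Y^2+bc^T & Yb+db\\ c^TY+dc^T & c^Tb+d^2\end{bmatrix}.
\end{equation*}
The idea is to let the unit $a_3$ produce the off-diagonal blocks and the corner entry, to let $a_4$ absorb a scalar correction, and to leave the top-left $2\times 2$ block to $a_1,a_2$ through Corollary \ref{cor f(2) for p=2} for $p=2$ and Corollary \ref{Cor f(2) for odd p} for odd $p$.

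\textbf{Step 1 (off-diagonal blocks).} I would take $X_1=\mathrm{diag}(Y_1,0)$ and $X_2=\mathrm{diag}(Y_2,0)$, so these two terms contribute only to the top-left block. For the third term I would take $X_3=\begin{bmatrix}0 & b\\ c^T & d\end{bmatrix}$ with $d$ a unit, say $d=1$ to begin with. Then $a_3X_3^2$ has off-diagonal blocks $a_3db$ and $a_3dc^T$, and these are solvable for arbitrary $\beta,\gamma$ by setting $b=\beta(a_3d)^{-1}$ and $c=\gamma(a_3d)^{-1}$. For the fourth term I would take $X_4=\begin{bmatrix}0&0&0\\0&0&N\\0&1&0\end{bmatrix}$, so that $X_4^2=\mathrm{diag}(0,N,N)$. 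This term only changes the $(2,2)$ and $(3,3)$ entries, and it leaves the off-diagonal blocks untouched.

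\textbf{Step 2 (corner entry).} With these choices the corner equation is $a_3(c^Tb+d^2)+a_4N=\delta$. Once Step 3 is settled, the top-left block equation becomes $a_1Y_1^2+a_2Y_2^2=M_{11}-a_3bc^T-a_4N\,E_{22}$, where $E_{22}$ is the matrix unit. The $2\times 2$ corollaries solve this for every right-hand side, because $a_1,a_2$ are units. So everything reduces to the single scalar equation for the corner.

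\textbf{Step 3 (the obstacle).} If $a_4$ is a unit, we simply solve the corner equation for $N$. When $a_4=p^ju$ with $j\ge1$, we must instead make $E:=\delta-a_3(c^Tb+d^2)$ divisible by $p^j$. I regard this as the main difficulty. My first attempt would be the translational trick from the proof of Theorem \ref{suff condition main thm p=2}, using the freedom that is still available. The unit $d$ is not yet fixed, and $b,c$ are tied to $d$ only through $db=\beta/a_3$ and $dc=\gamma/a_3$. Hence $c^Tb+d^2=\beta^T\gamma/(a_3^2d^2)+d^2$, and it is enough to choose a unit $d$ so that $\beta^T\gamma/(a_3^2d^2)+d^2\equiv\delta/a_3 \pmod{p^j}$. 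If this congruence cannot always be met, for instance for some residues when $p=2$, I would free an extra parameter. Two options are to let $X_3$ have a nonzero top-left block $Y_3$, which adds $Y_3b$ and $c^TY_3$ to the off-diagonal blocks and can be compensated by rescaling $b,c$, or to give $X_1$ a nonzero entry $d_1$ and then re-solve the $2\times2$ block. In either case the corner becomes an affine expression in a free variable whose coefficient is a unit, as with $z_1+tu$ in the $2\times 2$ proof. That expression can then be set to $\delta$ modulo $p^j$, after which $N$ is determined and the top-left block is finished by the $2\times 2$ corollary. This shows that four terms, with three unit coefficients, suffice, so $f(3)\le 4$.
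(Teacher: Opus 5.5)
There is a genuine gap, and it sits exactly in your Step 3, which carries the real content of the theorem. Your Steps 1--2 are correct when $a_4$ is a unit and $p$ is odd. However, $a_4$ may be any $p$-adic integer, including $0$, and your split $a_4=p^ju$ omits that case. With $a_4=0$ the statement says precisely that $a_1X_1^2+a_2X_2^2+a_3X_3^2$ is universal, and that implies every other case.

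Your first attempt fails for every prime, not only $p=2$. Take $\beta=\gamma=0$, $\delta=pa_3$, $a_4=p^2$. You are forced to $b=c=0$, the corner is $a_3d^2$, and $d^2\equiv p \pmod{p^2}$ has no solution.

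Neither fallback yields the asserted ``affine expression with unit coefficient''.
\begin{itemize}
\item \emph{Nonzero $Y_3$.} Keeping the off-diagonal blocks means solving $a_3(Y_3+dI)b=\beta$ and $a_3c^T(Y_3+dI)=\gamma^T$. If you do this by inverting $W=Y_3+dI$, then $c^Tb=\gamma^TW^{-2}\beta/a_3^2$. This is $\equiv 0\pmod{p^2}$ whenever $\beta\equiv\gamma\equiv0\pmod p$, so the corner stays $\equiv a_3d^2\pmod{p^2}$ and the same example is still out of reach.
\item \emph{A $(3,3)$ entry $d_1$ in $X_1$.} This adds $a_1d_1^2$, which is quadratic, not affine. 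For $a_1=a_3=1$ and $p\equiv3\pmod 4$, the example needs $d^2+d_1^2\equiv p\pmod{p^2}$ with $d$ a unit, which is impossible.
\end{itemize}

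What is actually needed is a lemma such as: for all $\beta,\gamma\in\mathbb{Z}_p^2$ and $s\in\mathbb{Z}_p$ there are $W\in M_2(\mathbb{Z}_p)$ and $b,c$ with $Wb=\beta$, $c^TW=\gamma^T$, $c^Tb=s$. Then $d=0$, $Y_3=W$, $X_4=0$ settle the off-diagonal blocks and the corner. This lemma is true, but only if $W$ is allowed to be singular or of non-unit determinant. A sketch:
\begin{itemize}
\item Change basis so that $\beta=p^ke_1$ with $k\le v_p(\gamma)$, the minimal valuation of the entries; otherwise transpose, which swaps the roles of $\beta$ and $\gamma$.
\item Take $b=(b_1,1)^T$ with $b_1$ chosen so that $v_p(\gamma^Tb)=v_p(\gamma)$.
\item Obtain $c$ from $c^T\beta=\gamma^Tb$ and $c^Tb=s$.
\item Solve $Wb=\beta$, $c^TW=\gamma^T$ for $W$.
\end{itemize}
None of this is in your proposal. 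The paper's proof does not supply it either. Its equations \eqref{eq 3b} and \eqref{eq 3e} contain the same combination $a_3z_3+a_4z_4$, so its ansatz forces $a_3(p_2-p_7)+p_1p_3\equiv0\pmod{a_4}$. Its step (d) also does not really solve \eqref{eq 3b} and \eqref{eq 3d} simultaneously. Like your Steps 1--2, it is complete only for unit $a_4$.

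Separately, for $p=2$ your Step 2 relies on Corollary \ref{cor f(2) for p=2}, and that corollary is false. So the reduction breaks down even when $a_4$ is a unit. Let $T=\mathrm{diag}(1,3)$ and suppose $T=X_1^2+X_2^2$ with $X_i=\begin{bmatrix}x_i&y_i\\z_i&w_i\end{bmatrix}$, $\tau_i=x_i+w_i$, $\delta_i=\det X_i$, so that $X_i^2=\tau_iX_i-\delta_iI$.
\begin{itemize}
\item From $-2=T_{11}-T_{22}=\tau_1(x_1-w_1)+\tau_2(x_2-w_2)$ and $x_i-w_i\equiv\tau_i\pmod2$ we get $\tau_1\equiv\tau_2\pmod 2$.
\item If both $\tau_i$ were even, the right side would be divisible by $4$, a contradiction.
\item If both are odd, the off-diagonal equations $\tau_1y_1+\tau_2y_2=\tau_1z_1+\tau_2z_2=0$ give $y_1z_1+y_2z_2=y_1z_1\bigl(1+(\tau_1/\tau_2)^2\bigr)$, which is even. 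Each $x_iw_i$ is even, so $\delta_1+\delta_2$ is even.
\item But $4=\mathrm{tr}\,T=\tau_1^2+\tau_2^2-2(\delta_1+\delta_2)$ with $\tau_i^2\equiv1\pmod8$ forces $\delta_1+\delta_2\equiv3\pmod4$.
\end{itemize}
This breaks your construction concretely. With all $a_i=1$ and $M=\mathrm{diag}(1,3,1)$, you get $b=c=0$, $N=1-d^2$, and top-left target $\mathrm{diag}(1,2+d^2)$ with $2+d^2\equiv3\pmod 8$. The same computation shows this is not a sum of two squares. The paper's step (e) leans on the same corollary. At $p=2$ you would have to use the leftover freedom (in $Y_3$, $b$, $c$) to force the top-left block into a representable shape. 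One such shape is a block whose diagonal entries differ by an odd number, where the Case (i) construction of Corollary \ref{cor f(2) for p=2} is valid.
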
 
	\begin{proof}
		
			Let $A = \begin{bmatrix}
				p_1 & p_2 & p_3 \\
				p_4 & p_5 & p_6 \\
				p_7 & p_8 & p_9 \\
			\end{bmatrix} \in M_3(\mathbb{Z}_p)$ be arbitrary. Suppose $a_1$ $a_2$ and $a_3$ are units. We need to prove the existence of the matrices $X_1$,$X_2$ and $X_3$ and $X_4$  such that $A= a_1X_1^2 + a_2X_2^2 + a_3X_3^2 + a_4X_4^2$. Let $X_i = \begin{bmatrix}
				0 & 0 \\
				0 & Y_i
			\end{bmatrix}$ ($Y_i \in M_2(\mathbb{Z}_2)$, $i=1,2$), $X_3 = \begin{bmatrix}
				0 & 0 & 1 \\
				1 & x_3 & 0 \\
				y_3 & z_3 & w_3
			\end{bmatrix}$ and $X_4= \begin{bmatrix}
				0 & 1 & 1\\
				1 & x_4 & 0 \\
				y_4 & z_4 & w_4
			\end{bmatrix}$. Then $A= a_1X_1^2 + a_2X_2^2 + a_3X_3^2 + a_4X_4^2$ holds if and only if the following equations hold.
			\begin{subequations}
				\begin{align}
					a_3y_3 + a_4 (1+y_4) &= p_1  \label{eq 3a} \\ 
					a_3z_3 + a_4z_4 + a_4x_4 &= p_2 \label{eq 3b} \\
					a_3w_3 + a_4w_4 &= p_3 \label{eq 3c} \\
					a_3x_3 + a_4x_4 &= p_4 \label{eq 3d} \\
					a_3z_3 + a_3w_3y_3 + a_4z_4 + a_4y_4w_4 &= p_7 \label{eq 3e} \\
					a_1Y_1^2 + a_2Y_2^2 + g(X_3, X_4) &= \begin{bmatrix}
						p_5 & p_6 \\
						p_8 & p_9
					\end{bmatrix} \label{eq 3f}
				\end{align}
			\end{subequations}
			where $g(X_3, X_4)$ is a function of $X_3$ and $X_4$ and not a function of $Y_1, Y_2$. We can find $x_i, y_i, z_i, w_i$ and $Y_i$ which satisfy above equations as follows:
			\begin{enumerate}
				\item[(a)] Since $a_3$ and $a_4$ are coprime, we can choose $y_3$, $y_4$, $w_3$, $w_4$ satisfying Equations \eqref{eq 3a} and \eqref{eq 3c}.
				\item[(b)] Substitute these values in Equation \eqref{eq 3e} and find $z_3$ and $z_4$ satisfying Equation \eqref{eq 3e}.
				\item[(c)] Substitute values of $z_3$ and $z_4$ in Equation \eqref{eq 3b}, so that we have
				\begin{equation}
					a_4x_4 = p_2-(a_3z_3+a_4z_4)  \label{eq 3g}
				\end{equation}
				\item[(d)] Now add Equations \eqref{eq 3g} and \eqref{eq 3d} to get $a_3x_3 + 2 a_4x_4 = p_2-(a_3z_3+a_4z_4) + p_4$. Since $a_3$ and $2a_4$ are coprime, we can find values of $x_3$ and $x_4$ satisfying Equations \eqref{eq 3b} and \eqref{eq 3d}.
				\item[(e)] Since $a_1$ and $a_2$ are units, from Corollary \ref{cor f(2) for p=2}, and Corollary \ref{Cor f(2) for odd p}, we can find $Y_1$ and $Y_2$ satisfying Equation \eqref{eq 3f}.
			\end{enumerate}
	\end{proof}
	\begin{cor}
		Any matrix of order $3$ over $Z_p$ can be expressed as a sum of at most $4$ squares.
	\end{cor}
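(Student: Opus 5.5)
The plan is to apply Theorem \ref{thm f(3) for p=2} with all four coefficients equal to $1$. Taking $a_1=a_2=a_3=a_4=1$ gives four units, which is more than the hypothesis needs. The theorem then says that the diagonal form $X_1^2+X_2^2+X_3^2+X_4^2$ is universal over $M_3(\mathbb{Z}_p)$. So every $A\in M_3(\mathbb{Z}_p)$ can be written as $A=X_1^2+X_2^2+X_3^2+X_4^2$ with $X_i\in M_3(\mathbb{Z}_p)$. Allowing some of the $X_i$ to be zero, or simply reading "at most" loosely, this gives the corollary.

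It is better not to quote the theorem as a black box, but to check that its construction really goes through for $a_i=1$. I would check the steps in this order.

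\begin{enumerate}
\item[(a)] Equations \eqref{eq 3a} and \eqref{eq 3c} become $y_3+(1+y_4)=p_1$ and $w_3+w_4=p_3$. These are solvable trivially, for instance with $y_4=w_4=0$.
\item[(b)] Equation \eqref{eq 3e} is then linear in $z_3+z_4$, so it fixes $z_3+z_4$.
\item[(c)] Equations \eqref{eq 3b} and \eqref{eq 3d} determine $x_4$ from \eqref{eq 3b}, and then $x_3=p_4-x_4$. No divisibility condition involving $2$ arises, because each unknown is solved for directly, with coefficient $1$.
\item[(d)] The remaining $2\times2$ block equation \eqref{eq 3f} is $Y_1^2+Y_2^2=M$ for some $M\in M_2(\mathbb{Z}_p)$. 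This is solvable by Corollary \ref{cor f(2) for p=2} when $p=2$ and by Corollary \ref{Cor f(2) for odd p} when $p$ is odd, applied with $a_1=a_2=1$. For $p=2$ one can equally invoke Corollary \ref{Cor matrix as a sum of squares p=2}, and for odd $p$ Corollary \ref{Cor matrix as a  square odd p}.
\end{enumerate}

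The step needing most care is the block computation behind \eqref{eq 3f}. The function $g(X_3,X_4)$ must depend only on the entries already fixed in (a)--(c). Then the bottom-right block equation really reduces to representing the fixed matrix $\begin{bmatrix}p_5&p_6\\p_8&p_9\end{bmatrix}-g(X_3,X_4)$ as $Y_1^2+Y_2^2$. I would verify this by writing out the lower-right $2\times2$ blocks of $X_3^2$ and $X_4^2$. One also has to embed the $Y_i$ correctly: $X_i$ is the $3\times 3$ block-diagonal matrix $0\oplus Y_i$, so that $X_i^2=0\oplus Y_i^2$ contributes nothing outside the bottom-right block. Granting this, as the theorem's proof does, the corollary follows immediately.
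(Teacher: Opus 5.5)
You follow the paper's route exactly. You take $a_1=\dots=a_4=1$ in Theorem~\ref{thm f(3) for p=2}, solve \eqref{eq 3a}--\eqref{eq 3e}, and hand the lower-right block to the $2\times2$ corollaries. Your direct solution ($x_4$ from \eqref{eq 3b}, then $x_3=p_4-x_4$) is correct, reproduces the paper's explicit values, and is cleaner than the theorem's step of adding \eqref{eq 3g} and \eqref{eq 3d}. For odd $p$ the argument is complete. There Corollary~\ref{Cor f(2) for odd p} is sound: $2$ is a unit, so \eqref{eq 2d} can be solved for $x_1$ and \eqref{eq 2a} for $N$.

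\textbf{The gap is at $p=2$: step (d) fails.} It fails in the paper too, because Corollaries~\ref{cor f(2) for p=2} and~\ref{Cor matrix as a sum of squares p=2} are false. Take $R=\mathrm{diag}(1,-1)$ and suppose $R=Y_1^2+Y_2^2$, with $t_i=\operatorname{tr}Y_i$ and $d_i=\det Y_i$. By Cayley--Hamilton, $Y_i^2=t_iY_i-d_iI$, so $0=\operatorname{tr}R=t_1^2+t_2^2-2(d_1+d_2)$, and hence $t_1\equiv t_2\pmod 2$.
\begin{itemize}
\item If both $t_i$ are even, then $R\equiv(d_1+d_2)I\pmod 2$ forces $d_1+d_2$ odd. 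But $t_1^2+t_2^2\equiv0\pmod 4$ forces it even.
\item If both are odd, then $t_1^2+t_2^2\equiv 2\pmod 4$ forces $d_1+d_2$ odd. But modulo $2$, $Y_2^2=R-Y_1^2\equiv Y_1+(1+d_1)I$, whose determinant is $\equiv d_1$. So $d_2\equiv d_2^2\equiv d_1$, a contradiction.
\end{itemize}
Nor is $R$ a square, since $-1$ is not a square in $\mathbb{Z}_2$.

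This block really occurs in your construction. Let $A$ have $p_5=p_6=2$, $p_9=-2$ and all other $p_i=0$. Your choice $y_4=w_4=0$ gives $y_3=-1$, $w_3=0$, $z_4=-z_3$ and $x_3=x_4=0$. Then $X_3^2+X_4^2=\begin{bmatrix}0&0&0\\0&1&2\\0&0&-1\end{bmatrix}$ for every $z_3$, which leaves exactly $\mathrm{diag}(1,-1)$ for $Y_1^2+Y_2^2$.

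\textbf{Repair for $p=2$.} You need a correct $2\times2$ input, and you must spend the leftover freedom. A block $R\in M_2(\mathbb{Z}_2)$ that is \emph{not} scalar modulo $2$ is a sum of two squares:
\begin{itemize}
\item If $R$ has odd trace, then $R-NI$ is a square for a suitable $N$.
\item If $R$ has even trace, conjugate $R$ to $\begin{bmatrix}0&-\delta\\1&\tau\end{bmatrix}$ and subtract the idempotent $\begin{bmatrix}1&b\\0&0\end{bmatrix}$. Choose $b$ so that the remainder $M$, which has trace $\tau-1$, has determinant $d^2$ with $\tau-1+2d\equiv1\pmod 8$. Then $M=\bigl((M+dI)/t\bigr)^2$, where $t^2=\tau-1+2d$.
\end{itemize}
A short parity check then shows that $z_3$, together with the splittings $y_3+y_4=p_1-1$ and $w_3+w_4=p_3$, can always be chosen so that the residual block is non-scalar modulo $2$.
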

	\begin{proof}
		From Theorem \ref{thm f(3) for p=2}, the quadratic form $\sum_{i=1}^{4} X_i^2$ must be universal. Similar calculations give the following values: (One of the many solutions) For any arbitrary $z_3 \in \mathbb{Z}_p$,
		\begin{eqnarray*}
			y_3&=& 0 \\
			y_4 &=& p_1-1 \\
			w_3&=& 0 \\
			w_4 &=& p_3 \\
			x_4 &=& p_4-x_3 \\
			x_3 &=& p_7-p_2+p_4-(p_1-1)p_3 \\
			z_4 &=& p_2-x_4-z_3
		\end{eqnarray*}
	\end{proof}
	
	Now we assume $n \geq 4$. We proceed exactly as~\cite{Koo2025}. Use the same notations. Let $E_{i,j} \in M_n(\mathbb{Z}_p)$ be the matrix unit whose $(i,j)$-th entry is 1 and other entries are 0. Let
	\begin{equation*}
		J=E_{n,n-1}+E_{n-1,n-2}+\ldots+E_{2,1}=\begin{bmatrix}
			0&\cdots &0&0&0\\
			1&\cdots &0&0&0\\
			\vdots&\ddots&\vdots&\vdots&\vdots \\
			0&\ldots & 1&0&0 \\
			0&\ldots&0&1&0
		\end{bmatrix}
	\end{equation*}
	and
	\begin{equation*}
		J_t=t\sum_{i=1}^{\left\lfloor \dfrac{n}{2}\right\rfloor}E_{n-2i+2,n-2i+1} + \sum_{i=1}^{\left\lfloor \dfrac{n-1}{2}\right\rfloor}E_{n-2i+1,n-2i}=\begin{bmatrix}
			0&\ldots&0&0&0&0 \\
			t'&\ldots&0&0&0&0 \\
			\vdots&\ddots&\vdots&\vdots&\vdots \\
			0&\ldots&t&0&0&0\\
			0&\ldots&0&1&0&0 \\
			0&\ldots&0&0&t&0
		\end{bmatrix}
	\end{equation*}
	for $t \in \mathbb{Z}_p$, where $t'=t$ if $n$ is even and $t'=1$ if $n$ is odd. Simple matrix calculations show that $J^2=E_{n,n-2} + E_{n-1,n-3} + \cdots +E_{3,1}$ and $J_t^2 = tJ^2$. we state the Lemmas 2.3 and 2.4 from ~\cite{Koo2025}.
	\begin{lem} \label{Lemma 2.3 of Koo}
		Assume $A \in M_n(\mathbb{Z}_p)$ satisfies $A_{i,i-2}=1$ for $3 \leq i \leq n$ and $A_{i,j}=0$ for $j \geq j+3$, i.e. $A$ is of the form $$A= \begin{bmatrix}
			* & \ldots &*&*&*&*\\
			* & \ldots &*&*&*&*\\
			1 & \ldots &*&*&*&*\\
			\vdots & \ddots & \vdots &\vdots &\vdots &\vdots\\
			0 & \ldots &1&*&*&*\\
			0 & \ldots &0&1&*&*
		\end{bmatrix}$$
		Then $A$ is similar to a matrix of the form $J^2 + \begin{bmatrix}
			0 & P\\
			0 & Q
		\end{bmatrix}$ for some $P \in M_{(n-2) \times 2}(\mathbb{Z}_p)$ and $Q \in M_2(\mathbb{Z}_p)$.
	\end{lem}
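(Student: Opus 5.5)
This is a companion-matrix style reduction, and I would prove it by induction on the columns, using elementary similarity transformations over $\mathbb{Z}_p$. The hypothesis says $A$ has $1$'s on the second subdiagonal and zeros below it, i.e. $A_{i,j}=0$ whenever $i\geq j+3$ (the intended reading of the condition). The target form is $J^2+\begin{bmatrix}0&P\\0&Q\end{bmatrix}$. In this form the first $n-2$ columns are exactly those of $J^2$: column $j$ is $e_{j+2}$ for $1\le j\le n-2$. The last two columns are arbitrary. So the goal is to clear, by similarity, every entry in the first $n-2$ columns other than the pivot $1$ at position $(j+2,j)$.

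The basic tool is conjugation by an elementary matrix $T=I+cE_{k,l}$ with $k\neq l$, whose inverse is $I-cE_{k,l}$. Since $c\in\mathbb{Z}_p$ is arbitrary and no division is needed, this is a similarity over $M_n(\mathbb{Z}_p)$. The map $A\mapsto TAT^{-1}$ adds $c$ times row $l$ to row $k$, then subtracts $c$ times column $k$ from column $l$.

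I process the columns $j=1,2,\dots,n-2$ in order, using the pivot $A_{j+2,j}=1$ to clear column $j$. For each $i\neq j+2$ with $A_{i,j}\ne0$, I conjugate by $T=I-A_{i,j}E_{i,j+2}$.

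\textbf{Row step.} Subtracting $A_{i,j}$ times row $j+2$ from row $i$ kills the entry $(i,j)$. In the earlier columns $j'<j$, row $j+2$ has only a possible $1$ at $(j+2,j)$, since those columns are already $e_{j'+2}$ and $j'+2\neq j+2$. So earlier columns are untouched.

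\textbf{Column step.} This adds $A_{i,j}$ times column $i$ to column $j+2$. Column $j+2$ lies strictly to the right of $j$, so it has not yet been processed.

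\textbf{The case $i=j$.} Here the column step adds a multiple of column $j$ itself to column $j+2$, and this only affects entries in column $j+2$. The danger is that the row step for $i=j$ could reintroduce an entry in column $j$. However, row $j+2$ restricted to column $j$ is the pivot, and the row operation for $i=j$ subtracts it exactly to clear $(j,j)$. So I would treat $i=j$ like any other $i$ and verify carefully that each operation changes column $j$ only at row $i$.

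The step I expect to be the main obstacle is this bookkeeping. I must show that clearing column $j$ neither destroys the earlier cleared columns nor breaks the pivots $A_{k+2,k}=1$ and the lower zeros of the later columns. The column operation only alters column $j+2$. Its pivot $(j+4,j+2)$ might change by $A_{i,j}A_{j+4,i}$, where $A_{j+4,i}$ is nonzero only if $i\ge j+2$ (by the lower-zero pattern). To avoid this, I would clear the entries of column $j$ in decreasing order of $i$, starting from the bottom. Alternatively, I would note that the zero pattern is preserved because column $i$ has zeros below row $i+2$. The pivot of column $j+2$ is disturbed only when $i+2\ge j+4$; for $i$ above the pivot this cannot happen, since $i<j+2$. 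For $i>j+2$, the hypothesis already gives $A_{i,j}=0$. Hence only rows $i<j+2$ need clearing, and the invariants survive.

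After all $n-2$ columns are processed, the matrix has the stated form. Its last two columns are unconstrained, which gives $P$ and $Q$.
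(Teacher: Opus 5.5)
Your argument is correct. The paper gives no proof of this lemma; it only says the proof is ``exactly similar'' to Koo--Lee. So the useful comparison is with the standard cyclic-basis argument rather than with anything in the text.

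Your bookkeeping holds up. Conjugation by $I-A_{i,j}E_{i,j+2}$ changes column $j$ only in entry $(i,j)$. It leaves the earlier columns $e_{j'+2}$, $j'<j$, untouched. Since $i\le j+1$, it also preserves the later pivots and lower zeros, for two reasons:
\begin{enumerate}
\item The row step only touches row $i$, which lies above every later pivot row $j''+2\ge j+3$.
\item The column step adds to column $j+2$ a multiple of column $i$, which vanishes below row $i+2\le j+3<j+4$.
\end{enumerate}
The order in which you clear the entries of column $j$ is irrelevant, so the ``decreasing order of $i$'' remark is unnecessary.

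Every $T$ you use is upper unitriangular and fixes $e_1$ and $e_2$. Hence your composite similarity $V$ satisfies $V^{-1}e_1=e_1$, $V^{-1}e_2=e_2$ and $AV^{-1}e_k=V^{-1}e_{k+2}$ for $k\le n-2$. In other words, $V^{-1}$ is exactly the matrix $U$ whose columns are $v_1=e_1$, $v_2=e_2$, $v_{k+2}=Av_k$.

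The cyclic-basis proof writes this matrix down directly:
\begin{enumerate}
\item For $m\le n-2$ we have $Ae_m\in e_{m+2}+\mathrm{span}(e_1,\dots,e_{m+1})$.
\item Induction then gives $v_k\in e_k+\mathrm{span}(e_1,\dots,e_{k-1})$.
\item So $U$ is unipotent upper triangular, hence in $GL_n(\mathbb{Z}_p)$.
\item Finally $U^{-1}AUe_k=e_{k+2}$ for $k\le n-2$.
\end{enumerate}
That version replaces your invariant-tracking with a one-line triangularity induction. Yours has the merit of being fully explicit in elementary operations, and it produces the same similarity matrix.
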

	\begin{lem} \label{Lemma 2.4 of Koo}
		Assume that $B \in M_n({\mathbb{Z}_p})$ satisfies $B_{i,i-1} =1$ for $3 \leq i \leq n$ and $B_{i,j} = 0$ for $1 \leq i \leq j \leq n-1$, i.e. $B$ is of the form $$B= \begin{bmatrix}
		0 & 0 & \ldots & 0 & 0 & *\\
		* & 0 & \ldots & 0 & 0 & *\\
		* & 1 & \ldots & 0 & 0 & *\\
		\vdots& \vdots & \ddots & \vdots & \vdots &\vdots \\
		* & * & \ldots & 1 & 0 & *\\
		* & * & \ldots & * & 1 & *\\
		\end{bmatrix}.$$ Then $B$ is similar to a matrix $A$ which satisfies conditions in Lemma \ref{Lemma 2.3 of Koo}.
	\end{lem}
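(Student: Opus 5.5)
Lemma \ref{Lemma 2.4 of Koo} asks us to conjugate $B$ by an invertible matrix over $\mathbb{Z}_p$ so that the unit subdiagonal is pushed down to the second subdiagonal and the strictly upper part outside the last two columns is cleared. The natural tool is the matrix $J$ itself. Since the entries $B_{i,i-1}=1$ for $3\le i\le n$, the submatrix of $B$ on rows $2,\dots,n$ and columns $1,\dots,n-1$ is lower triangular with ones on the diagonal, except possibly at position $(2,1)$. I would therefore proceed in two steps. First, I would arrange $B_{2,1}$ to be a unit (indeed $1$), so that $B$ has a full unit subdiagonal; the entries of $B$ above the diagonal are zero except in the last column. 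Second, I would show that such a $B$ is similar to a companion-type (Hessenberg) matrix, and then that its square-like structure gives the form of Lemma \ref{Lemma 2.3 of Koo}.

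More concretely, I plan a change of basis using a cyclic-vector argument. If $B_{2,1}$ is a unit, then $e_1, Be_1, \dots, B^{n-1}e_1$ form a basis of $\mathbb{Z}_p^n$: the matrix with these columns is lower triangular with unit diagonal, because $B^ke_1$ has $(k+1)$-th entry $B_{2,1}B_{3,2}\cdots B_{k+1,k}$ and zeros above it. Here I use that the first $n-1$ columns of $B$ are strictly lower triangular. However, Lemma \ref{Lemma 2.3 of Koo} needs ones on the \emph{second} subdiagonal and zeros for $i\ge j+3$. This suggests using the basis $e_1, e_2, Be_1, Be_2, B^2 e_1,\dots$, interleaved so that $B$ shifts index by two. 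Equivalently, I would let $P$ be the matrix whose $k$-th column is $v_k$, with $v_1=e_1$, $v_2=e_2$, and $v_{k}=Bv_{k-2}$ for $k\ge 3$. I would then check that $P$ is unipotent lower triangular up to units, hence invertible over $\mathbb{Z}_p$. In the new basis, $P^{-1}BP$ sends $v_k\mapsto v_{k+2}$ for $k\le n-2$, giving $A_{i,i-2}=1$ and zeros elsewhere in those columns. The last two columns ($Bv_{n-1}, Bv_n$) are arbitrary, which is exactly the allowed starred region.

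The verification that the $v_k$ form a basis is the main step. I would show by induction that $v_k$ has its leading (lowest-index nonzero) entry in row $k$, and that this entry is a product of the $B_{i,i-1}$, which are units. Because $B$ maps $e_j$ into $\mathrm{span}(e_{j+1},\dots,e_n)$ with coefficient $B_{j+1,j}$ on $e_{j+1}$ for $j\le n-1$, the vector $v_k=B^{\lfloor (k-1)/2\rfloor}e_{1\text{ or }2}$ has leading index at least $k$. I expect the main obstacle to lie here. Applying $B$ to $e_1$ or $e_2$ raises the index by only one, so the leading indices of the two chains collide (both chains land in row $3$ etc.). The interleaving above is therefore wrong as stated. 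I would repair it by taking $v_k=B^{k-1}$-type vectors from a single chain $e_1, Be_1, B^2e_1,\dots$ and setting $w=$ this basis, so that $B$ acts as a companion matrix $C$ with a unit subdiagonal. I would then observe that $C$ is not yet in the form of Lemma \ref{Lemma 2.3 of Koo}, and note that the relevant application is actually to $A=B^2$ or $A=J_t$-perturbations. I would then reread the intended statement: presumably the lemma is used for $B$ where $B^2$ has the Lemma \ref{Lemma 2.3 of Koo} shape. If literal similarity of $B$ is intended, I would fall back on the explicit elementary-conjugation argument of \cite{Koo2025}: clear entries column by column using $E_{i,j}$ elementary similarities with unit pivots $B_{i,i-1}$, working from the top row down. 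Since every pivot is $1$, the argument over $\mathbb{Z}$ transfers verbatim to $\mathbb{Z}_p$.
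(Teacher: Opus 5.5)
Your proposal never reaches an argument of its own. Each concrete route you try breaks, and the one you settle on is only a citation.

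\textbf{What fails.}

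\emph{The cyclic-vector step.} It is false even when $B_{2,1}=1$, because it overlooks the last column. Once $B^ke_1$ has a nonzero $e_n$-coordinate, $B^{k+1}e_1$ picks up entries in the top rows, so the Krylov matrix is not triangular. For $n=4$, take $B=E_{2,1}+E_{3,2}+E_{4,3}-E_{4,1}+E_{2,4}$, which satisfies the hypotheses. Then $Be_1=e_2-e_4$, $B^2e_1=e_3-e_2$ and $B^3e_1=e_4-e_3$, so $Be_1+B^2e_1+B^3e_1=0$.

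\emph{The companion-matrix route.} No route through a companion matrix can work, because $B$ need not be cyclic. The matrix $B=E_{3,2}+E_{4,3}+\dots+E_{n,n-1}$ satisfies the hypotheses, and $\ker B=\langle e_1,e_n\rangle$ has rank $2$, also modulo $p$.

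\emph{The $e_1,e_2$ interleaving.} This fails, as you noticed; for $n=4$ the relevant determinant is $B_{3,1}B_{4,2}-B_{4,1}$.

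\emph{The reinterpretation via $B^2$ or $J_t$.} This is a misreading. The lemma is about $B$ itself, and the main theorem applies it to $B=S-a_1X_1^2$ to obtain $UBU^{-1}=J^2+\begin{bmatrix}0&P\\0&Q\end{bmatrix}$.

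\emph{The fallback.} Your closing appeal to \cite{Koo2025} is exactly what the paper does; it only says the proof is as there. But the mechanism you describe is clearing entries with the pivots $B_{i,i-1}$, and these are not all units, since $B_{2,1}$ is arbitrary. That description contains no step moving the ones from the first to the second subdiagonal, and that move is the whole content of the lemma.

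\textbf{The missing idea.} The interleaving idea you abandoned is the right one. What is missing is a choice of starting vectors that avoids both the collision and the last column.

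It suffices to find $v_1,v_2$ such that the $n$ vectors $v_1,v_2,Bv_1,Bv_2,B^2v_1,\dots$ form a $\mathbb{Z}_p$-basis. In that basis the first $n-2$ columns of $B$ are $e_3,\dots,e_n$.

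First normalize $B$ without touching its last column. Columns $2,\dots,n-1$ of $B$ map $\mathrm{span}(e_2,\dots,e_{n-1})$ unitriangularly onto $\mathrm{span}(e_3,\dots,e_n)$, so you can pull $e_n$ back. Put $f_n=e_n$, and let $f_k\in\mathrm{span}(e_2,\dots,e_{n-1})$ satisfy $Bf_k=f_{k+1}$. Then $f_k\in e_k+\mathrm{span}(e_{k+1},\dots,e_{n-1})$. In the basis $e_1,f_2,\dots,f_n$, the matrix $B$ is the shift $f_2\mapsto\dots\mapsto f_n$ plus a first and a last column.

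Now choose the two chains. Take $v_1=f_2$, whose chain is $f_2,\dots,f_{m+1}$ with $m=\lceil n/2\rceil$. Take $v_2=e_1+y$ with $y\in\mathrm{span}(f_2,\dots,f_{n-1})$. Fix the coordinates of $y$ one at a time; each equation has coefficient $1$. Arrange that:
\begin{itemize}
\item $B^kv_2$ has zero $f_n$-coordinate for $1\le k\le\lfloor n/2\rfloor-2$, so the last column never enters;
\item $B^kv_2\in f_{m+1+k}+\mathrm{span}(f_2,\dots,f_{m+1},f_{m+2+k},\dots,f_n)$ for $1\le k\le\lfloor n/2\rfloor-1$.
\end{itemize}

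Modulo $\mathrm{span}(f_2,\dots,f_{m+1})$, the vectors $v_2,Bv_2,\dots$ are then unitriangular in $e_1,f_{m+2},\dots,f_n$. So the interleaved family is a basis, and $U^{-1}BU=J^2+\begin{bmatrix}0&P\\0&Q\end{bmatrix}$ directly. All pivots are $1$, so the argument is valid over $\mathbb{Z}_p$.
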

	The proofs of these lemmas are exactly similar to the proof in ~\cite{Koo2025}. Now we prove the $\mathbb{Z}_p$-version of Lemma 2.5 of~\cite{Koo2025}, for odd prime $p$.
	\begin{lem} \label{Lemma 2.5 of Koo}
		Let $S \in M_n(\mathbb{Z}_p)$ and $a_1$ be unit in $\mathbb{Z}_p$. Then there is $X_1 \in M_n(\mathbb{Z}_p)$ such that $B=S-a_1X_1^2$ satisfies the conditions in Lemma \ref{Lemma 2.4 of Koo}. 
	\end{lem}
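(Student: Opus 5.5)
The idea is to choose $X_1$ with $1$'s on the subdiagonal, zeros strictly below it, and unknown entries on and above the diagonal. The equation $B=S-a_1X_1^2$ then becomes a triangular system that can be solved entry by entry. The only division needed is by $a_1$, so the only hypothesis used is that $a_1$ is a unit; the argument does not depend on $p$ being odd.

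\textbf{Setup.} Write $X_1=(x_{i,j})$ with
\[
x_{i+1,i}=1 \ (1\le i\le n-1), \qquad x_{i,j}=0 \ (i\ge j+2),
\]
and leave the entries $x_{i,j}$ with $j\ge i$ free. The conditions of Lemma \ref{Lemma 2.4 of Koo} on $B$ are then equivalent to
\begin{align*}
(X_1^2)_{i,i-1} &= a_1^{-1}(S_{i,i-1}-1), \quad 3\le i\le n,\\
(X_1^2)_{i,j} &= a_1^{-1}S_{i,j}, \quad 1\le i\le j\le n-1 .
\end{align*}
Since $a_1\in U$, the right-hand sides lie in $\mathbb{Z}_p$. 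No condition is imposed on the last column or on the entries below the subdiagonal of $B$.

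\textbf{Subdiagonal equations.} Because $X_1$ vanishes below the subdiagonal, only $k=i-1$ and $k=i$ contribute to $(X_1^2)_{i,i-1}$, giving
\[
(X_1^2)_{i,i-1}=x_{i,i-1}x_{i-1,i-1}+x_{i,i}x_{i,i-1}=x_{i-1,i-1}+x_{i,i}.
\]
So I choose $x_{1,1},x_{2,2}$ arbitrarily, say $0$. I then define $x_{i,i}=a_1^{-1}(S_{i,i-1}-1)-x_{i-1,i-1}$ recursively for $i=3,\dots,n$. This fixes the diagonal and settles the subdiagonal equations.

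\textbf{Upper equations.} For $i\le j\le n-1$, the index $k$ in $(X_1^2)_{i,j}=\sum_k x_{i,k}x_{k,j}$ runs over $i-1\le k\le j+1$. Using $x_{i,i-1}=1$ and $x_{j+1,j}=1$, this reads
\[
(X_1^2)_{i,j}= x_{i-1,j}+\sum_{k=i}^{j}x_{i,k}x_{k,j}+x_{i,j+1},
\]
where $x_{0,j}:=0$. The unknown $x_{i,j+1}$ appears linearly with coefficient $1$, and every other term involves only entries in columns $\le j$. I therefore proceed column by column, $j=1,\dots,n-1$. Assuming columns $1,\dots,j$ are already determined, I set
\[
x_{i,j+1}=a_1^{-1}S_{i,j}-x_{i-1,j}-\sum_{k=i}^{j}x_{i,k}x_{k,j}, \qquad 1\le i\le j .
\]
This fixes every entry of column $j+1$ strictly above the diagonal. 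The diagonal entry $x_{j+1,j+1}$ was already fixed in the previous step, so there is no conflict. The pairs $(i,j)$ with $i\le j\le n-1$ correspond bijectively to the strictly-upper positions $(i,j+1)$. Hence each unknown is determined exactly once, and the system is consistent.

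\textbf{Main obstacle.} The main thing to verify is this bijection together with the dependency order. One must check that the formula for $x_{i,j+1}$ never uses an entry of column $j+1$ or later; this holds because $k\le j$ and $x_{i-1,j}$ lies in column $j$. One must also check that the subdiagonal equations involve only diagonal entries. Once that bookkeeping is done, all entries lie in $\mathbb{Z}_p$, and $B=S-a_1X_1^2$ satisfies the conditions of Lemma \ref{Lemma 2.4 of Koo}.
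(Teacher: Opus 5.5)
Your proof is correct and follows essentially the same route as the paper: both take $X_1 = J + T$ with $T$ upper triangular and solve the resulting triangular system column by column, dividing only by the unit $a_1$. The only difference is cosmetic: the paper solves for $x_{r+1,r+1}$ together with the rest of column $r+1$ at step $r$, and it also imposes $B_{2,1}=1$, which is not needed. You instead fix all diagonal entries first via $x_{i,i} = a_1^{-1}(S_{i,i-1}-1) - x_{i-1,i-1}$, and you rightly note that oddness of $p$ is never used.
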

	\begin{proof}
		Let $T \in M_n(\mathbb{Z}_p)$ be upper-triangular matrix, $X=J+T$ and $x_{i,j}:= X_{i,j}=T_{i,j}$ for $1 \leq i \leq j \leq n$. We prove the lemma by choosing the $p$-adic integers $x_{i,j}$ which satisfy the equation
		\begin{equation} \label{Eq for triangular matrix}
			\begin{bmatrix}
				B_{1,i} \\
				\vdots\\
				B_{i,i}\\
				B_{i+1,i}
			\end{bmatrix} = \begin{bmatrix}
			0 \\
			\vdots\\
			0 \\
			1
			\end{bmatrix}
		\end{equation}
		for each $1 \leq i \leq n-1$.
		\begin{enumerate}[(a)]
			\item Choose $x_{1,1} = 0$. Then Equation \eqref{Eq for triangular matrix} for $i=1$ is equivalent to
			\begin{equation} \label{S_11 equation}
				\begin{bmatrix}
					S_{1,1} \\
					S_{2,1}
				\end{bmatrix} - a_1 \begin{bmatrix}
				x_{1,2}\\
				x_{2,2}
				\end{bmatrix} = \begin{bmatrix}
				0 \\
				1
				\end{bmatrix}.
			\end{equation}
			Since $a_1$ is unit, we can choose $x_{1,2}$ and $x_{2,2}$ satisfying Equation \eqref{S_11 equation}.
			\item Assume that we have chosen the integers $x_{i,2}$ ($1 \leq i$) which satisfy Equation \eqref{Eq for triangular matrix} for $1 \leq i \leq r-1$. Then Equation \eqref{Eq for triangular matrix} is equivalent to
			\begin{equation*}
				\begin{bmatrix}
					S_{1,r}\\
					\vdots\\
					S_{r+1,r}
				\end{bmatrix} - a_1 \begin{bmatrix}
				x_{1,r+1}\\
				\vdots \\
				x_{r+1,r+1}
				\end{bmatrix} = M
			\end{equation*}
			where $M \in M_{r+1,1}(\mathbb{Z}_p)$ is determined by $a_1$ and $x_{i,j}$ ($1 \leq i \leq j \leq r$). Since $a_1$ is unit, we can choose $x_{i,r+1}$ which satisfy the above equation.
		\end{enumerate}
	\end{proof}
	Now we state $p$-adic version of Lemma 2.6 of~\cite{Koo2025}, for an odd prime $p$.
	\begin{lem} \label{Lemma 2.6 for odd prime}
		Let $a_2$ and $a_3$ be units in $\mathbb{Z}_p$, $P \in M_{(n-2) \times 2}(\mathbb{Z}_p)$ and $Q \in M_2(\mathbb{Z}_p)$. Then there are $X_2$ and $X_3 \in M_n(\mathbb{Z}_p)$ such that $J^2 + \begin{bmatrix}
			0 & P\\
			0 & Q
		\end{bmatrix} - a_2X_2^2 = a_3 X_3^2 $. 
	\end{lem}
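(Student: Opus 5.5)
We would prove the lemma by an explicit construction in the spirit of Lemma 2.6 of \cite{Koo2025}. The idea is to let $X_3$ carry the nilpotent part $J^2$ through the matrices $J_t$, using $J_t^2=tJ^2$. The last two columns, that is $P$ and $Q$, would be absorbed by $a_2X_2^2$ together with a correction term in $X_3$. The $2\times 2$ block would be settled by Corollary \ref{cor f(2) for p=2} and Corollary \ref{Cor f(2) for odd p}. Because both sides of the required identity are polynomial in the entries, everything reduces to solving equations in $\mathbb{Z}_p$ in which $a_2$, $a_3$ and the leading coefficients are units.

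\textbf{Step 1.} Since $a_3$ is a unit, put $t=a_3^{-1}$. Then $a_3J_t^2=J^2$, so $a_3J_t^2$ reproduces the $J^2$ part exactly. The lemma then asks for $X_2$ and a perturbation $Y$ with
\begin{equation*}
\begin{bmatrix} 0 & P\\ 0 & Q\end{bmatrix} = a_2X_2^2 + a_3\bigl(J_tY+YJ_t+Y^2\bigr),
\end{equation*}
where $X_3=J_t+Y$.

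\textbf{Step 2.} Take $X_2=\begin{bmatrix}0&U\\0&V\end{bmatrix}$, with $U\in M_{(n-2)\times 2}(\mathbb{Z}_p)$ and $V\in M_2(\mathbb{Z}_p)$. Then
\begin{equation*}
X_2^2=\begin{bmatrix}0&UV\\0&V^2\end{bmatrix},
\end{equation*}
which is supported in the last two columns. Take $Y$ supported in the last two columns as well. Then $J_tY$ and $Y^2$ are still supported there. The product $YJ_t$, however, moves column $n$ of $Y$ into column $n-1$ (multiplied by $t$) and column $n-1$ of $Y$ into column $n-2$.

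To control this, we restrict $Y$ so that its column $n-1$ vanishes, and then check whether the leftover term $t\cdot(\text{column } n \text{ of } Y)$ in column $n-1$ can be paired against the $(n-1)$-st column of $a_2X_2^2$. If this restriction turns out too rigid, we would instead allow $X_2=J_r+Z$ and $X_3=J_s+Y$ with $a_2r+a_3s=1$. Either choice matches $J^2$, and it gives an extra free parameter for cancelling the stray entries in column $n-2$.

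\textbf{Step 3.} With the shapes fixed, compare entries column by column.
\begin{enumerate}[(a)]
\item The bottom-right $2\times2$ block yields an equation of the form $Q' = a_2V^2 + a_3W^2$, where $Q'$ is $Q$ corrected by known terms. This is solved by Corollary \ref{cor f(2) for p=2} for $p=2$ and by Corollary \ref{Cor f(2) for odd p} for odd $p$, since $a_2$ and $a_3$ are units.
\item The top $(n-2)\times2$ block yields equations that are linear in the entries of $U$ and of the upper part of $Y$. They can be solved row by row from the bottom, because each equation contains a unit coefficient ($a_2$, $a_3$, or $a_3t=1$) multiplying a fresh unknown. This is the same triangular-solving pattern as in Lemma \ref{Lemma 2.5 of Koo}.
\end{enumerate}

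\textbf{Main obstacle.} The hardest step is the cross term $YJ_t$, which leaks entries outside the last two columns, together with the interaction of $V$ (or $W$) with the $2\times2$ block. The key difficulty is arranging that the leaked entries cancel exactly and that the equations for $U$ involve $V$ only through a factor that is invertible, or that can be made invertible.

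First we would try to choose the solution in Step 3(a) with $V$ invertible. The proof of Theorem \ref{thm Main theorem p=2} gives considerable freedom here: $x_1,y_1,z_1$ can be translated, so the determinant can be adjusted to a unit. With $V$ invertible, $U=a_2^{-1}P'V^{-1}$ solves the $P$-equations directly, where $P'$ is $P$ corrected by known terms. If invertibility of $V$ cannot be arranged for the given $Q$, we would move to the two-parameter ansatz $a_2r+a_3s=1$ from Step 2 and absorb the defect there.
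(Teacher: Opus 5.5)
Your framework is right. In fact the first ansatz of Step 2 (one matrix equal to $J_t$ plus a perturbation in column $n$, the other of the shape $\begin{bmatrix}0&U\\0&V\end{bmatrix}$) is exactly the paper's, with the roles of $a_2$ and $a_3$ exchanged. The gap is Step 3(a), which is where all the content of the lemma lies.

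Column $n-1$ of $Y$ must vanish, since nothing can cancel its leak into column $n-2$. Hence the bottom-right block of $X_3=J_t+Y$ is forced to be $W=\begin{bmatrix}0&w\\ t&z\end{bmatrix}$ with $t=a_3^{-1}$. In addition, the entry $(J_t)_{n-1,n-2}=1$ contributes a cross term $a_3y\,E_{1,2}$, where $y=Y_{n-2,n}$. Corollaries \ref{cor f(2) for p=2} and \ref{Cor f(2) for odd p} only give \emph{some} pair $(V,W)$, with no control over $W_{11}$ and $W_{21}$, so they do not apply here. Writing $Q=\begin{bmatrix}a&b\\c&d\end{bmatrix}$, what actually has to be solved is
\begin{equation*}
Q-a_2V^2=\begin{bmatrix} w & a_3(y+wz)\\ z & w+a_3z^2\end{bmatrix}.
\end{equation*}
That is, you need $V$ such that $R=Q-a_2V^2$ satisfies the nonlinear condition $R_{22}-R_{11}=a_3R_{21}^2$, and you also want $V$ invertible for the $P$-equations. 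Your proposal leaves both points open. The translations in the proof of Theorem \ref{suff condition main thm p=2} concern a different matrix shape, and the fallback with $a_2r+a_3s=1$ is never carried out. A telling symptom is that you never use that $p$ is odd (you even claim $p=2$), whereas the paper needs $2\in\mathbb{Z}_p^{\times}$ exactly at this step.

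The missing idea is the paper's choice $V=M^2$ with $M=\begin{bmatrix}x&1+x+x^2\\-1&-1-x\end{bmatrix}$. Since $\mathrm{tr}\,M=-1$ and $\det M=1$, we have $M^2+M+I=0$, hence $M^3=I$. So $\begin{bmatrix}0&L\\0&M\end{bmatrix}$ is the square of its own square. Taking $U=LM$, the matrix $V$ is automatically invertible, $V^2=M$, and $UV=L$ can be prescribed freely. The block equation then becomes solvable step by step:
\begin{enumerate}[(a)]
\item the $(2,1)$ entry gives $z=c+a_2$;
\item the $(1,1)$ and $(2,2)$ entries combine to $2w=a+d+a_2-a_3z^2$, which is solvable precisely because $p$ is odd;
\item then $x=a_2^{-1}(a-w)$;
\item the $(1,2)$ entry gives $y$;
\item finally $L$ is read off from $P$, because $a_2$ is a unit.
\end{enumerate}
Inserting this choice into your ansatz would complete your proof.
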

	\begin{proof}
		Since $a_2$ is unit in $\mathbb{Z}_p$, let $t_2 \in \mathbb{Z}_p$ such that $a_2t_2=1$. Let
		\begin{equation*}
			B_2 = s_1 E_{n-2,n} + s_2 E_{n-1,n} +s_3 E_{n,n}.
		\end{equation*}
		Let	$X_2 = J_{t_2} + B_2$. Hence
		\begin{eqnarray*}
		    X_2^2 &=& (J_{t_2} + B_2)^2 \\
			&=& J^2_{t_2} + J_{t_2} B_2 + B_2J_{t_2} +B_2^2 \\
			&=& t_2 J^2 + (J_{t_2} B_2 + B_2J_{t_2} +B_2^2)
		\end{eqnarray*}
		Now \begin{eqnarray*}
			J_2B_2 &=& s_1 E_{n-1,n} + t_2s_2 E_{n,n} \\
			B_2J_2 &=& (s_1 E_{n-2,n} + s_2E_{n-1,n} +s_3 E_{n,n}) (t_2 E_{n,n-1}) \\
			B_2^2 &=& s_1s_3 E_{n-2,n} + s_2s_3 E_{n-1,n} + s_3^2 E_{n,n}
		\end{eqnarray*}
		Since $a_2t_2=1$,
		\begin{equation} \label{Eq a2X2^2 for odd prime}
			a_2X_2^2 = J^2 +  \left[
			\begin{array}{c|cc}
				\textbf{0} & 0 & 0 \\ 
				\textbf{0} & s_1 & a_2s_1s_3 \\\hline
				\textbf{0} & s_2 & a_2 (s_1+s_2s_3) \\
				\textbf{0} & s_3 & s_2+ a_2s_3^2
			\end{array}
			\right]
		\end{equation}
		Let $L \in M_{(n-2) \times 2} (\mathbb{Z}_p)$ and $M = \begin{bmatrix}
			x & 1+x+x^2 \\
			-1 & -1-x
		\end{bmatrix}$
		Let $X_3 = \begin{bmatrix}
			0 & L \\
			0 & M
		\end{bmatrix}^2$. Note that $M^2 = \begin{bmatrix}
		-1-x & -1-x-x^2 \\
		1 & x
		\end{bmatrix}$ and $M^3 = I_2$. Therefore, $X_3 ^2 = \begin{bmatrix}
		0 & L \\
		0 & M
		\end{bmatrix}$. Hence
		\begin{equation} \label{Eq a3X3^2 for odd prime}
			a_3X_3^2 = \left[
			\begin{array}{c|cc}
				
				\textbf{0} & * &* \\ \hline
				\textbf{0} & a_3L_{n-2,1} & a_3L_{n-2,2} \\
				\textbf{0} & a_3x & a_3(1+x+x^2) \\
				\textbf{0} & -a_3 & a_3(-1-x)
			\end{array}
			\right]
		\end{equation}
		Let $Q = \begin{bmatrix}
			a & b \\
			c & d
		\end{bmatrix}$.
		From Equations \eqref{Eq a2X2^2 for odd prime} and \eqref{Eq a3X3^2 for odd prime},
		\begin{equation}
			J^2 + \begin{bmatrix}
				0 & P \\
				0 & Q
			\end{bmatrix} - a_2X_2^2 = a_3X_3^2
		\end{equation}
		if and only if the following equations are satisfied.
		\begin{subequations}
			\begin{align}
				a-s_2 &= a_3x  \label{Eq 10a} \\
				b- (a_2s_1 + a_2s_2s_3) &= a_3 (1+x+x^2) \label{Eq 10b}\\
				c-s_3 &= -a_3 \label{Eq 10c}\\
				d- (s_2 + a_2s_3^2) &= -a_3-a_3x \label{Eq 10d}\\
				P - \begin{bmatrix}
					0 & 0 \\
					s_1 & a_2s_1s_3
				\end{bmatrix} &= a_3L \label{Eq 10e}
			\end{align}
		\end{subequations}
		We can solve these equations for $x, s_1, s_2, s_3$ and $L$ as follows:
		\begin{enumerate}[(a)]
			\item From \eqref{Eq 10c}, find $s_3$.
			\item From \eqref{Eq 10a} and \eqref{Eq 10d}, $d-(s_2 + a_2s_3^2) = -a_3 -(a-s_2)$. Since $2$ is unit in $\mathbb{Z}_p$, we can find $s_2$. Since $a_3$ is unit, we can find $x$.
			\item Since $a_2$ is unit, we can find $s_1$ satisfying \eqref{Eq 10b}.
			\item Since $a_3$ is unit, we can find $L$ satisfying \eqref{Eq 10e}.
		\end{enumerate}
	\end{proof}
	The step (b) above is not applicable for $p=2$. We state the  $\mathbb{Z}_2$-version of Lemma 2.6 of ~\cite{Koo2025}. 
	\begin{lem} \label{Lemma 2.6 for p=2}
		Let $a_2,a_3,a_4$ be $2$-adic integers such that $a_2$ and $a_4$ are units, $P \in M_{(n-2) \times 2}(\mathbb{Z}_2)$ and $Q \in M_2(\mathbb{Z}_2)$. Then there are $X_2,X_3,X_4 \in M_n(\mathbb{Z}_2)$ such that $J^2 + \begin{bmatrix}
			0 & P\\
			0 & Q
		\end{bmatrix} - a_2X_2^2 - a_3 X_3^2 = a_4 X_4^2 $. 
	\end{lem}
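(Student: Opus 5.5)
We follow the proof of Lemma \ref{Lemma 2.6 for odd prime}, with $a_4$ in the role of $a_3$. The extra term $a_3X_3^2$ will be used only to repair the one step that fails for $p=2$, namely step (b). Put $t_2=a_2^{-1}$ and take $X_2=J_{t_2}+B_2$ with $B_2=s_1E_{n-2,n}+s_2E_{n-1,n}+s_3E_{n,n}$, exactly as before, so that $a_2X_2^2$ is given by Equation \eqref{Eq a2X2^2 for odd prime}. Take $X_4=\begin{bmatrix}0&L\\0&M\end{bmatrix}^2$ with $M=\begin{bmatrix}x&1+x+x^2\\-1&-1-x\end{bmatrix}$. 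Since $M^3=I_2$, we get $X_4^2=\begin{bmatrix}0&L\\0&M\end{bmatrix}$, so $a_4X_4^2$ has the shape of Equation \eqref{Eq a3X3^2 for odd prime} with $a_3$ replaced by $a_4$. If $Q$ is replaced by a modified block $Q'=\begin{bmatrix}a'&b'\\c'&d'\end{bmatrix}$, which absorbs the contribution of $a_3X_3^2$, we must solve the analogues of \eqref{Eq 10a}--\eqref{Eq 10e}.

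\textbf{Step 1: the parity obstruction.} Solve $s_3=c'+a_4$ from \eqref{Eq 10c}. Eliminating $s_2$ between \eqref{Eq 10a} and \eqref{Eq 10d} gives
\begin{equation*}
a'-d'+a_2s_3^2=a_4(1+2x).
\end{equation*}
This can be solved for $x\in\mathbb{Z}_2$ exactly when $a'-d'+a_2s_3^2-a_4$ is even. Since $a_2$ and $a_4$ are units and $s_3^2\equiv s_3\pmod 2$, this condition is equivalent to $a'+c'+d'\equiv 0 \pmod 2$. Once it holds, the rest is as in the odd case. We get $x$ because $a_4$ is a unit, then $s_2$ from \eqref{Eq 10a}, then $s_1$ from \eqref{Eq 10b} because $a_2$ is a unit, and then $L$ from \eqref{Eq 10e} because $a_4$ is a unit.

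\textbf{Step 2: choosing $X_3$.} If $a+c+d$ is already even, take $X_3=0$, so $Q'=Q$, and we are done. Otherwise we need $X_3$ such that $a_3X_3^2$ is zero outside the last two columns, has zero lower-left $(n-2)\times 2$ block, and flips the parity of $a'+c'+d'$. Any entries it creates in the block $P$ are harmless, because \eqref{Eq 10e} absorbs them through $L$. If $a_3$ is a unit, take $X_3=E_{n-1,n-1}$. Then $Q'=Q-a_3E_{1,1}$, which changes $a$ by an odd amount.

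\textbf{Step 3: $a_3$ even (the main obstacle).} Here every contribution of $a_3X_3^2$ is even, so $X_3$ cannot change the parity, and the fix must come from $X_2$ itself. My first attempt is to enlarge $B_2$ by a term $uE_{n-1,n-1}$, or by terms in row $n-1$ or $n$ of the last two columns, and recompute $a_2X_2^2$. The product $J_{t_2}\,uE_{n-1,n-1}$ contributes $uE_{n,n-1}$, which shifts $c'$ by $u$ and so flips the parity when $u$ is odd. However, $uE_{n-1,n-1}J_{t_2}$ produces a stray entry in position $(n-1,n-2)$, outside the last two columns. I would try to cancel this entry using $X_3$ supported in the lower-right region, for instance with $X_3^2$ having a $(n-1,n-2)$ entry. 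I would also try replacing the single $J_{t_2}$ by a suitably perturbed nilpotent matrix whose square is still $t_2J^2$. This is the step I expect to require the real work, and it may need a case split on $a_3$ beyond unit versus non-unit.
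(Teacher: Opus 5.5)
\textbf{The gap is Step 3.} Your Steps 1 and 2 are correct. For odd $a_3$, taking $X_3=E_{n-1,n-1}$ is a simpler substitute for the paper's B\'ezout splitting $a_2t_2+a_3t_3=1$. But Step 3, the case $a_3\in 2\mathbb{Z}_2$ with $a+c+d$ odd, is a genuine gap, and it cannot be closed inside your ansatz. Modulo $2$, the lower right $2\times 2$ block $R$ of $a_2X_2^2-J^2$ satisfies $R_{11}+R_{21}+R_{22}=2s_2+s_3+a_2s_3^2\equiv 0$. The block $a_4M$ satisfies the same relation, since $x-1-1-x=-2$. Also $a_3X_3^2\equiv 0$. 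So $a+c+d$ must be even, whatever $X_3$ is.

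Your proposed repair fails on two counts:
\begin{enumerate}[(i)]
\item For the intended flip $u$ must be odd. The stray entry $u$ at $(n-1,n-2)$ is then odd. It can be cancelled neither by $a_3X_3^2$, which is even, nor by $a_4X_4^2$, which is supported in the last two columns.
\item $B_2^2$ contains $u^2E_{n-1,n-1}$, which shifts $a'$ by $a_2u^2\equiv u$. So $a'+c'+d'$ does not change parity anyway.
\end{enumerate}
The paper's own argument does not close this case either. Its replacement $s_3\mapsto s_3+a_3t_3$, $u_3\mapsto u_3-a_2t_2$ changes $a_2s_3^2+a_3u_3^2$ by $a_2a_3(t_2+t_3)$ modulo $2$, which is $0$ when $a_3$ is even. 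Then \eqref{Eq 14c} forces $a_2s_3^2+a_3u_3^2\equiv s_3\equiv c+1$, so that construction also needs $a+c+d$ even.

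\textbf{The missing idea: a shape-preserving similarity that flips the parity.} The lemma is insensitive to similarity: $U^{-1}AU=a_2Y_2^2+a_4Y_4^2$ gives $A=a_2(UY_2U^{-1})^2+a_4(UY_4U^{-1})^2$. Because $Ae_j=e_{j+2}$ for $j\le n-2$, any unimodular basis with $e'_{j+2}=Ae'_j$ for $j\le n-2$ again yields a matrix $J^2+\begin{bmatrix}0&P'\\0&Q'\end{bmatrix}$.

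For $n$ odd, take $e'_{2k+1}=e_{2k+1}$ and $e'_{2k+2}=e_{2k+2}+e_{2k+1}$. Then $Ae'_{n-1}=Ae_{n-1}+e_n$ and $Ae'_n=Ae_n$, and one finds $a'=a$, $b'=b$, $d'=d$, $c'=c+1$.

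For $n$ even, conjugation by $\mathrm{diag}(G,\dots,G)$ with $G\in GL_2(\mathbb{Z}_2)$ fixes $J^2$ and replaces $Q$ by $G^{-1}QG$.
\begin{itemize}
\item For $G=\begin{bmatrix}1&0\\1&1\end{bmatrix}$ the new $a+c+d$ is $\equiv b+c$.
\item For $G=\begin{bmatrix}0&1\\1&0\end{bmatrix}$ it is $\equiv a+b+d$.
\end{itemize}
These two add up to $a+c+d\equiv 1$, so one of them is even.

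After this normalization your Step 1 goes through with $X_3=0$ for every $a_3$. So the coefficient $a_3$ is not needed at all.
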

	\begin{proof}
		Since $a_2$ is unit, $a_2$ and $a_3$ are coprime. Choose $t_2, t_3 \in \mathbb{Z}_2$ such that $a_2t_2 + a_3t_3 = 1$. Next we define two matrices $B_2$ and $B_3$ as follows:
		\begin{eqnarray*}
			B_2 &=& s_1 E_{n-2,n} + s_2 E_{n-1,n} +s_3 E_{n,n} \\
			B_3 &=& u_1 E_{n-2,n} + u_2 E_{n-1,n} + u_3 E_{n,n}
		\end{eqnarray*}
		We then define target matrices as $X_2 = J_{t_2} + B_2$ and $X_3= J_{t_3} + B_3$. We proceed as before.  		\begin{equation*}
			X_2^2 = t_2J^2 + 
			\left[
			\begin{array}{c|cc}
				\textbf{0} & 0 & 0 \\ \textbf{0} & t_2s_1 & s_1s_3 \\\hline
				\textbf{0} & t_2s_2 & s_1+s_2s_3 \\
				\textbf{0} & t_2s_3 & t_2s_2+s_3^2
			\end{array}
			\right]
		\end{equation*}
		Similarly, 
		\begin{equation*}
			X_3^2 = t_3J^2 + 
			\left[
			\begin{array}{c|cc}
				\textbf{0} & 0 & 0 \\ 
				\textbf{0} & t_3u_1 & u_1u_3 \\ \hline
				\textbf{0} & t_3u_2 & u_1+u_2u_3 \\ 
				\textbf{0} & t_3u_3 & t_3u_2+u_3^2
			\end{array}
			\right]
		\end{equation*}
	Since $a_2t_2 + a_3t_3 = 1$,
	\begin{equation} \label{Eq LHS for Z2}
		a_2 X_2^2 + a_3 X_3^2 = J^2 + \left[
		\begin{array}{c|cc}
			\textbf{0} & 0 & 0 \\ 
			\textbf{0} & d_1 & d_2 \\ \hline
			\textbf{0} & a_2t_2s_2 + a_3t_3u_2 & a_2(s_1+s_2s_3) + a_3(u_1+u_2u_3) \\
			\textbf{0} & a_2t_2s_3 + a_3 t_3u_3 & a_2(t_2s_2+s_3^2)+a_3(t_3u_2+u_3^2)
		\end{array}
		\right]
	\end{equation}
	for $d_1, d_2 \in \mathbb{Z}_2$. \\
	We will define $X_4$ as before. Let $L \in M_{(n-2) \times 2}(\mathbb{Z}_2)$ and  $M = \begin{bmatrix}
	x & 1+x+x^2 \\
	-1 & -1-x
	\end{bmatrix} \in M_2(\mathbb{Z}_2)$ and $X_4 = \begin{bmatrix}
	0 & L\\
	0 & M
	\end{bmatrix} ^2 \in M_n(\mathbb{Z}_2)$. Then,
	\begin{equation} \label{Eq RHS for Z2}
		a_4X_4^2 = \left[
		\begin{array}{c|cc}
			\textbf{0}& * & * \\ \hline
			\textbf{0} & a_4L_{n-1,2} & a_4L_{n-1,2} \\
			\textbf{0} & a_4x & a_4(1+x+x^2) \\
			\textbf{0} & -a_4 & a_4(-1-x)
		\end{array}
		\right]
	\end{equation}
	Let $Q = \begin{bmatrix}
		a & b \\
		c & d
	\end{bmatrix}$.
	From Equations \eqref{Eq LHS for Z2} and \eqref{Eq RHS for Z2},
	\begin{equation}
		J^2 + \begin{bmatrix}
			0 & P \\
			0 & Q
		\end{bmatrix} - a_2X_2^2 -a_3X_3^2= a_4X_4^2
	\end{equation}
	if and only if the following equations are satisfied.
	\begin{subequations}
		\begin{align}
			a-(a_2t_2s_2 + a_3t_3 u_2) &= a_4x  \label{Eq 14a} \\
			b- (a_2s_1+ a_3u_1)-(a_2s_2s_3 +a_3u_2u_3) &= a_4 (1+x+x^2) \label{Eq 14b}\\
			c-a_2t_2s_3 - a_3t_3u_3 &= -a_4 \label{Eq 14c}\\
			d- (a_2t_2s_2 + a_3t_3u_2) - a_2s_3^2 -a_3u_3^2&= -a_4-a_4x \label{Eq 14d}\\
			P - \begin{bmatrix}
				0 & 0 \\
				d_1 & d_2
			\end{bmatrix} &= a_4L \label{Eq 14e}
		\end{align}
	\end{subequations}
	We can solve these equations for $x, s_1, s_2, s_3, u_1,u_2,u_3$ and $L$ as follows:
	\begin{enumerate}[(a)]
		\item Since $a_2t_2 + a_3t_3 =1$, we can find $s_3$ and $u_3$ satisfing Equation \eqref{Eq 14c}.
		\item To solve \eqref{Eq 14d} and \eqref{Eq 14a}, we need $a_2s_3^2 + a_3u_3^2 \equiv a+d+a_4 (\mbox{mod 2})$.
		\item To achieve the above equivalence, we replace $s_3$ by $s_3 + a_3t_3$ and $u_3$ by $u_3-a_2t_2$, as in ~\cite{Koo2025}. This replacement doesn't change \eqref{Eq 14c}. Moreover, $a_2(s_3 + a_3t_3)^2 + a_3(u_3-a_2t_2)^2 \equiv a+d+a_4 (\mbox{mod 2})$.
		\item Let $a-d_3=d_4$ and and $d-d_3 - (a_2s_3^2 + a_3u_3^2) = -a_4-d_4$. Substitute $d_4$ in the the second equation, we get $2d_3 = a + d  +a_4 - (a_2s_3^2 + a_3u_3^2)$. We can now find $d_3$ satisfying this equation, and hence $d_4$ also where $d_3 = a_2t_2s_2 + a_3t_3u_2$ and $d+4=a_4x$. This gives us $s_2, u_2$ and $x$ satisfying Equations \eqref{Eq 14a} and \eqref{Eq 14d}.
		\item Since $a_2$ and $a_3$ are coprime, we can find $s_1,u_1$ satisfying Equation \eqref{Eq 14b}.
		\item Since $a_4$ is unit, we canfind $L$ satisfying Equation \eqref{Eq 14e}.
	\end{enumerate}
	\end{proof}
	Now we can find the upper bound for $f(n)$ for $n \geq 4$.
	\begin{thm}
		For every integer $n \geq 4$, $f(n) \leq 3$, if $p$ is odd prime. $f(n) \leq 4$, if $p=2$.
	\end{thm}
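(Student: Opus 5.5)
The plan is to chain Lemmas \ref{Lemma 2.5 of Koo}, \ref{Lemma 2.4 of Koo}, \ref{Lemma 2.3 of Koo} and then finish with Lemma \ref{Lemma 2.6 for odd prime} or Lemma \ref{Lemma 2.6 for p=2}. Throughout we use that universality of $\sum a_iX_i^2$ over $M_n(\mathbb{Z}_p)$ is invariant under simultaneous conjugation: if $S=U^{-1}S'U$ with $U\in GL_n(\mathbb{Z}_p)$ and $S'=\sum a_iY_i^2$, then $S=\sum a_i(U^{-1}Y_iU)^2$.

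\textbf{Odd $p$.} Let $S\in M_n(\mathbb{Z}_p)$ be arbitrary and let $a_1,\dots,a_m$ be given with $m\ge 3$ and at least three of them units. After relabelling, assume $a_1,a_2,a_3$ are units, and set $X_i=0$ for $i\ge 4$.
\begin{enumerate}[(a)]
\item By Lemma \ref{Lemma 2.5 of Koo}, choose $X_1$ with $B=S-a_1X_1^2$ of the shape in Lemma \ref{Lemma 2.4 of Koo}.
\item By Lemma \ref{Lemma 2.4 of Koo}, $B$ is similar to some $A$ of the shape in Lemma \ref{Lemma 2.3 of Koo}.
\item By Lemma \ref{Lemma 2.3 of Koo}, $A$ is similar to $J^2+\begin{bmatrix}0&P\\0&Q\end{bmatrix}$. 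So $B=U^{-1}\left(J^2+\begin{bmatrix}0&P\\0&Q\end{bmatrix}\right)U$ for some invertible $U$ over $\mathbb{Z}_p$.
\item Lemma \ref{Lemma 2.6 for odd prime} gives $X_2',X_3'$ with $J^2+\begin{bmatrix}0&P\\0&Q\end{bmatrix}=a_2X_2'^2+a_3X_3'^2$.
\item Conjugating back, $S=a_1X_1^2+a_2(U^{-1}X_2'U)^2+a_3(U^{-1}X_3'U)^2$.
\end{enumerate}
Hence $f(n)\le 3$.

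One point needs care. The similarities in Lemmas \ref{Lemma 2.3 of Koo} and \ref{Lemma 2.4 of Koo} must be realised by matrices invertible over $\mathbb{Z}_p$, not only over $\mathbb{Q}_p$. In \cite{Koo2025} these are products of elementary matrices $I+\lambda E_{i,j}$, so this carries over verbatim. I would say so explicitly.

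\textbf{$p=2$.} Assume $a_1,a_2,a_4$ are units, with $a_3$ arbitrary. This is possible with $m=4$ and three units; if $a_3$ happens to be a unit too, nothing changes. Steps (a)–(c) are the same using $a_1$. Then Lemma \ref{Lemma 2.6 for p=2}, applied with units $a_2$ and $a_4$, writes $J^2+\begin{bmatrix}0&P\\0&Q\end{bmatrix}=a_2X_2^2+a_3X_3^2+a_4X_4^2$. Conjugating back gives $f(n)\le 4$. If $m>4$, set the extra variables to zero.

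\textbf{Main obstacle.} The hard part is checking that Lemma \ref{Lemma 2.6 for p=2} really delivers its parity condition, step (c) there. The replacement $s_3\mapsto s_3+a_3t_3$, $u_3\mapsto u_3-a_2t_2$ changes $a_2s_3^2+a_3u_3^2$ modulo $2$ by
\[
a_2a_3t_3+a_3a_2t_2\equiv a_2a_3(t_2+t_3).
\]
If this is $\equiv 0$, the parity cannot be adjusted. For instance, if $a_3$ is even then $t_2$ is odd, and the shift is even.

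In that case I would adjust instead through the free choice of the pair $(t_2,t_3)$, replacing it by $(t_2+a_3,\,t_3-a_2)$. Alternatively, since $a_4$ is a unit, I would shift $x$ and use the $-a_4x$ term in \eqref{Eq 14d} to absorb the parity. If neither works, a fallback is to let $a_3$ play the role of a unit by relabelling: three of the four coefficients are units, so I can always take $a_2,a_3$ both units in Lemma \ref{Lemma 2.6 for p=2}. Then $t_2+t_3$ can be chosen odd, which makes the shift odd.
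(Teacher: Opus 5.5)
Your odd-$p$ argument is exactly the paper's proof: Lemma~\ref{Lemma 2.5 of Koo}, then Lemmas~\ref{Lemma 2.4 of Koo} and~\ref{Lemma 2.3 of Koo}, then Lemma~\ref{Lemma 2.6 for odd prime}, then conjugation by $U\in GL_n(\mathbb{Z}_p)$. It is fine. For $p=2$ the paper only says ``similarly'', so it relies on Lemma~\ref{Lemma 2.6 for p=2} with $a_3$ arbitrary. You are right that step (c) of that lemma breaks down when $a_3$ is even, and the paper's argument has the same hole. However, none of your three repairs works. So the case that $f(n)\le 4$ actually requires is left unproved: exactly three units, with the even coefficient forced into the $a_3$ slot.

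\begin{itemize}
\item \emph{The obstruction.} If $a_3$ is even, then $a_2t_2\equiv 1 \pmod 2$ for every pair with $a_2t_2+a_3t_3=1$. So \eqref{Eq 14c} reduces mod $2$ to $s_3\equiv c+1$, while $a_2s_3^2+a_3u_3^2\equiv s_3$. The required congruence $a_2s_3^2+a_3u_3^2\equiv a+d+a_4$ therefore becomes $c\equiv a+d \pmod 2$. This is a condition on $Q$ alone, and no choice of $t_2,t_3,s_3,u_3$ can influence it.
\item \emph{Changing $(t_2,t_3)$.} The substitution $(t_2,t_3)\mapsto(t_2+a_3,\,t_3-a_2)$ keeps $t_2$ odd and so changes nothing mod $2$.
\item \emph{Shifting $x$.} This cannot help. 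The parity condition comes from adding \eqref{Eq 14a} and \eqref{Eq 14d}, and $x$ cancels there because every $M$ in the family has trace $-1$.
\item \emph{Relabelling.} Your fallback needs $a_1$ to be a unit for Lemma~\ref{Lemma 2.5 of Koo}. It needs $a_2$ and $a_4$ to be units for Lemma~\ref{Lemma 2.6 for p=2}; $a_4$ must be invertible to solve \eqref{Eq 14e} on rows $1,\dots,n-3$ for general $P$, and these rows exist since $n\ge 4$. It now also needs $a_3$ to be a unit. That is four units, while the hypothesis gives only three.
\end{itemize}

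So your argument proves the $p=2$ bound only when all four coefficients are units. Closing the gap needs a new idea for the case $c\not\equiv a+d \pmod 2$. Options include exploiting freedom before Lemma~\ref{Lemma 2.6 for p=2} is applied, or using a different shape for $X_2$ and $X_3$. Changing only $X_4$ will not do. Even taking $X_4=\begin{bmatrix}0&L'\\0&K\end{bmatrix}$ with arbitrary $K\in GL_2(\mathbb{Z}_2)$ leaves the same obstruction. The same mod-$2$ computation forces $\operatorname{tr}(K^2)+(K^2)_{2,1}\equiv a+c+d$. But every square in $GL_2(\mathbb{F}_2)$ is $I$, $\begin{bmatrix}1&1\\1&0\end{bmatrix}$ or $\begin{bmatrix}0&1\\1&1\end{bmatrix}$, and each of these has trace plus $(2,1)$-entry equal to $0$.
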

	\begin{proof}
		Suppose $p$ is an odd prime. Let $a_1,a_2$ and $a_3$ be units in $\mathbb{Z}_p$. We will prove that $\sum_{i=1}^{3} a_iX_i^2$ is universal over $M_n(\mathbb{Z}_p)$. Let $S \in M_n(\mathbb{Z}_p)$. By Lemmas \ref{Lemma 2.3 of Koo}, \ref{Lemma 2.4 of Koo} and \ref{Lemma 2.5 of Koo}, there exist $X_1 \in M_n(\mathbb{Z}_p)$ such that $B= S- a_1X_1^2$ is similar to the matrix of the form $J^2 + \begin{bmatrix}
			0 & P\\
			0 & Q
		\end{bmatrix}$ for some $P \in M_{(n-2) \times 2}(\mathbb{Z}_p)$ and $Q \in M_2(\mathbb{Z}_p)$. Let $U \in M_n(\mathbb{Z}_p)$ be an invertible matrix such that $UBU^{-1} = J^2 + \begin{bmatrix}
		0 & P\\
		0 & Q
		\end{bmatrix}$.\\
		
		By Lemma \ref{Lemma 2.6 for odd prime}, there are $X_2$ and $X_3$ such that $J^2 + \begin{bmatrix}
			0 & P \\
			0 & Q
		\end{bmatrix} = a_2X_2^2 + a_3X_3^2$.
		now
		\begin{equation*}
			S= a_1X_1^2 + U^{-1}(J^2 + \begin{bmatrix}
				0 & P\\
				0 & Q
			\end{bmatrix})U = a_1X_1^2 + a_2 \left(U^{-1}X_2 U\right)^2 + a_3 (U^{-1}X_3 U)^2.
		\end{equation*}
		Hence $f(n) \leq 3$. Similarly we can prove that $f(n) \leq 4$ for $p=2$.
	\end{proof}
	\begin{cor}
		Any matrix over $M_n(\mathbb{Z}_p)$ (for odd prime $p$), $n \geq 4 $ can be expressed as a sum of at most three squares. Any matrix over $M_n(\mathbb{Z}_2)$ can be expressed as a sum of at most four squares.
	\end{cor}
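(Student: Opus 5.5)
The corollary follows by specializing the preceding theorem to the case where all coefficients equal $1$. Since $1$ is a unit in $\mathbb{Z}_p$, the choice $a_1=a_2=a_3=1$ meets the hypothesis that at least three coefficients are units. For odd $p$ the bound $f(n)\le 3$ then says that $X_1^2+X_2^2+X_3^2$ is universal over $M_n(\mathbb{Z}_p)$. Concretely, for a given $S\in M_n(\mathbb{Z}_p)$ the procedure is as follows.

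\begin{enumerate}[(a)]
\item Apply Lemma \ref{Lemma 2.5 of Koo} with $a_1=1$ to obtain $X_1$ such that $B=S-X_1^2$ has the shape required in Lemma \ref{Lemma 2.4 of Koo}.
\item Conjugate $B$, using Lemmas \ref{Lemma 2.4 of Koo} and \ref{Lemma 2.3 of Koo}, into the form $J^2+\begin{bmatrix}0&P\\0&Q\end{bmatrix}$.
\item Apply Lemma \ref{Lemma 2.6 for odd prime} with $a_2=a_3=1$ to write this matrix as $X_2^2+X_3^2$.
\item Conjugate back by $U^{-1}$. Since $(U^{-1}XU)^2=U^{-1}X^2U$, squares remain squares, and we obtain $S$ as a sum of three squares.
\end{enumerate}

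Some of the resulting summands may vanish, which is why the statement says ``at most''.

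For $p=2$ the same chain works, with Lemma \ref{Lemma 2.6 for p=2} used in place of Lemma \ref{Lemma 2.6 for odd prime}. We take $a_1=a_2=a_3=a_4=1$, so the units $a_2$ and $a_4$ required by that lemma are present, and $a_3=1$ is trivially allowed. The Bézout relation $a_2t_2+a_3t_3=1$ can be satisfied, for instance, with $t_2=1$ and $t_3=0$. This writes $J^2+\begin{bmatrix}0&P\\0&Q\end{bmatrix}-X_2^2-X_3^2$ as $X_4^2$, so together with $X_1$ from Lemma \ref{Lemma 2.5 of Koo} every $S\in M_n(\mathbb{Z}_2)$ is a sum of four squares. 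Here $n\ge 4$ is implicit, as in the theorem.

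The only point needing real care is that Lemma \ref{Lemma 2.5 of Koo} was stated ``for odd prime $p$''. Its proof, however, uses only that $a_1$ is a unit, since each column is solved by dividing by $a_1$, so it carries over verbatim to $p=2$. With that observation both cases follow directly.
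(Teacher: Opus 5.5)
Your proposal is correct and is essentially the paper's own argument: the corollary is the preceding theorem specialized to all coefficients equal to $1$, carried out through the same chain of Lemmas \ref{Lemma 2.5 of Koo}, \ref{Lemma 2.4 of Koo}, \ref{Lemma 2.3 of Koo} and \ref{Lemma 2.6 for odd prime} (respectively \ref{Lemma 2.6 for p=2} when $p=2$), followed by conjugation by $U^{-1}$. Your extra checks are sound and fill in what the paper leaves implicit: the proof of Lemma \ref{Lemma 2.5 of Koo} uses only that $a_1$ is a unit, so it holds for $p=2$; and in Lemma \ref{Lemma 2.6 for p=2} the replacement in step (c) changes the parity of $a_2s_3^2+a_3u_3^2$ only when $a_3$ is odd, so your choice $a_3=1$ is exactly what makes that step go through (with $t_2=1$, $t_3=0$, $u_3$ is even free in \eqref{Eq 14c}).
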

	\bibliography{library}
	\bibliographystyle{vancouver}
\end{document}